\newtheorem{theorem}{Theorem}
\newtheorem{lemma}[theorem]{Lemma}
\newtheorem{definition}[theorem]{Definition}
\newcommand{\B}{\mathbb{B}}
\newcommand{\D}{\mathbb{D}}
\newcommand{\N}{\mathbb{N}}
\newcommand{\C}{\mathbb{C}}
\newcommand{\eps}{\varepsilon}
\newcommand{\supp}{\mathrm{supp\;}}
\newenvironment{proof*}{\vskip 2mm\noindent {}}{\hfill $\Box$ \vskip 2mm}
\begin{document}

\title[Cyclicity in the polydisc and ball]{Cyclicity of non vanishing functions in the polydisc and in the ball}

\author{Eric Amar and
Pascal J. Thomas}

\address{E. Amar: Universit\'e de Bordeaux\\
351 Cours de la Lib\'eration, Talence\\ France}
\email{Eric.Amar@math.u-bordeaux.fr}

\address{P.J. Thomas: Universit\'e de Toulouse\\ UPS, INSA, UT1, UTM \\
Institut de Math\'e\-ma\-tiques de Toulouse\\
F-31062 Toulouse, France} 
\email{pascal.thomas@math.univ-toulouse.fr}

\keywords{}

\subjclass[2000]{}

\thanks{}

%\date{\today}

\begin{abstract}
We use a special version of the Corona Theorem in several variables,
valid when all but one of the data functions are smooth, to generalize to
the polydisc and to the ball results obtained by El Fallah, Kellay and Seip about
cyclicity of non vanishing bounded holomorphic functions in large enough
Banach spaces of analytic functions determined either by weighted sums of powers of Taylor coefficients
or by radially weighted integrals of powers of the modulus of the function. 
\end{abstract}

\maketitle

\section{Introduction}
The Hardy space can be seen as a space of square integrable functions on the circle with vanishing Fourier coefficients for the negative integers, a space of holomorphic functions on the unit disk, or the
space of complex valued series with square summable moduli, and the interaction between those viewpoints
has generated a long and rich history of works in harmonic analysis, complex function theory and operator
theory.

The present work aims at generalizing one particular aspect of this to several complex variables: the
study of cyclicity of some bounded holomorphic functions under the shift operator in large enough Banach spaces containing the Hardy space. 

\subsection{Definitions.}
\begin{definition}
\label{series}
Let $\omega : \N^d \longrightarrow (0,\infty)$, where $d\in \N^*$, and $p\ge 1$. We define
the Banach space of power series in several variables
$$
X_{\omega, p} := \left\{ f(z):= \sum_{I\in  \N^d} a_I z^I : \|f\|_{X_{\omega, p}}^p :=
\sum_{I\in  \N^d} \left( \frac{|a_I|}{\omega(I)}\right)^p <\infty
\right\},
$$
with the usual multiindex notation, $z=(z_1, \dots, z_d)\in \C^d$, $I=(i_1, \dots, i_d) \in \N^d$,
$z^I:= z_1^{i_1} \cdots z_d^{i_d}$. 
\end{definition}
We also write $|I|:= i_1 + \cdots + i_d$, $I!:= i_1! \cdots i_d!$. 
We say that $\omega$ is \emph{nondecreasing} if for any $I, J$, $\omega (I+J) \ge \omega(J)$. 

Recall that domains of convergence of power series are logarithmically convex complete
Reinhardt domains (for a definition and those terms and proofs, see e.g. \cite{Kra}, \cite{Horm}).
In what follows, we shall restrict our attention to the cases of the polydisc 
$\D^d:= \{ z \in \C^d: \max_{1\le j \le d} |z_j| <1 \}$ and the unit ball
$\B^d:= \{ z \in \C^d: \sum_{1\le j \le d} |z_j|^2 <1 \}$. The letter $\Omega$
will stand for either one of those two domains, except in the more general
Theorem \ref{corona}.

If $\omega(I)=1$ for any $I$, then we obtain the Hardy space $H^2(\D^d)$, which can also
be described as the set of functions in the Nevanlinna class of the polydisc
with 
boundary values (radial limits a.e.) on the torus $(\partial \D)^d$ which are in $L^2((\partial \D)^d)$,
and
$$
\|f\|_{H^2(\D^d)}^2 = \sum_{I\in \N^d } |a_I|^2 = \frac1{(2\pi)^d}\int_{(\partial \D)^d} |f|^2 d\theta_1 \dots d\theta_d.
$$
The standard references for Hardy spaces on polydiscs is \cite{RudinPolydiscs69}.

There is a Hardy space for $\B^d$, which is most easily described as 
as the set of functions in the Nevanlinna class of the ball
with 
boundary values (radial limits a.e.) on the sphere $\partial \B^d$ which are in $L^2(\partial \B^d)$,
and
$$
\|f\|_{H^2(\B^d)}^2 =  \int_{\partial \B^d} |f|^2 d\sigma,
$$
where $\sigma$ is the $(2d-1)$-real dimensional Lebesgue measure normalized so that $\sigma (\partial \B^d)=1$. 
The standard reference for function theory on the unit ball is \cite{RudinBall}. 
Lemma \ref{hardyweight} gives a description of $H^2(\B^d)$ in terms of the coefficients in the
Taylor expansion.
%Note that $\|z^J\|^2_{H^2(\B^d)}= \frac{(d-1)! J!}{(|J|+d-1)!}$ \cite[p. 12]{RudinBall}.

\begin{definition}
\label{stdweight}
We set 
$\omega_2^{\D^d}(J):= 1$, and
\[
\omega_2^{\B^d}(J):=\frac1{\|z^J\|_{H^2(\B^d)} }= \left( \frac{(|J|+d-1)!}{(d-1)! J!} \right)^{1/2}.
\]
\end{definition}
We sometimes use the notation $\omega_2(J)$ (without superscript) when either of those quantities is meant.

We still have to understand in what sense a power series $f$ can be understood as a function of $z \in \Omega$.
We will want to consider weights which satisfy the following relative monotonicity condition :
 there exists a constant $C_m\ge 1$ such that, for any $I, J \in \N^d$,
\begin{equation}
\label{relmono}
C_m \omega (I+J) \ge \omega(J) \omega^\Omega_2 (I).
\end{equation}
%In the case of the polydisk, 
One can check that $\omega=\omega_2^{\B^d}$ itself verifies condition \eqref{relmono}. 

When $\Omega =\D^d$, then  $\omega^\Omega_2 (I)=1$ and if
$C_m=1$, we recover the usual monotonicity.
We observe that for the polydisc, we can reduce ourselves to the case $C_m=1$.

\begin{lemma}
\label{relabs}
Let $\Omega=\D^d$. If $\omega$ satisfies Condition \eqref{relmono}, then $X_{\omega, p}$
admits an equivalent norm given by the \emph{nondecreasing} weight
\[
\hat \omega (I) := \inf_{J \in \N^d} \omega(I+J).
\]
\end{lemma}
\begin{proof}
Since $0\in \N^d$ and we have \eqref{relmono}, $1 \ge \frac{\hat \omega (I)}{\omega (I)} \ge C_m^{-1}$, and 
\[
\hat \omega (I+K) = \inf_{J \in \N^d} \omega(I+K+J) = \inf_{J \in K+\N^d} \omega(I+J)
\ge \inf_{J \in \N^d} \omega(I+J) = \hat \omega (I).
\]
\end{proof}
Since the new norm is equivalent to the original one, the problem is unchanged
and there is no loss of generality in assuming that $\omega$ has been modified and made 
nondecreasing, and we shall do so henceforth. 

\begin{lemma}
\label{lemzero}
Let $\Omega = \D^d$ or $=\B^d$.

If $\omega$ verifies the relative monotonicity condition \eqref{relmono} and 
\begin{equation}
\label{weightcond}
\log \omega_2^\Omega(I) \le \log \omega(I) \le \log \omega_2^\Omega(I) + o(|I|), 
\end{equation}
for any $f \in X_{\omega, p}$, the series defining $f$ converges on $\Omega$,
%and the point evaluations $f\mapsto f(z)$ are continuous on $X_{\omega, p}$ for each $z\in \Omega$.
and the map 
%\newline
$X_{\omega, p} \ni f \mapsto f(z)$ is  continuous with respect to the norm
$ \|\cdot\|_{X_{\omega, p}}$. 
In particular, $X_{\omega, p}$ can be seen as a subset of the
space $\mathcal H (\Omega)$ of holomorphic functions on $\Omega$. 

Furthermore,
 there is no larger domain on which every $f \in X_{\omega, p}$ has to be holomorphic.
\end{lemma}
This lemma will be proved in Section \ref{ConvDom}.

In the ball case, consider $\lambda $  a probability measure on $[0,1)$.
%such that $\int_0^1 d\lambda(r)=1$.
\begin{definition}
\label{bergmanball}
The \emph{radially weighted Bergman space} associated to $\lambda$ is
\begin{multline*}
{\mathcal{B}} ={\mathcal{B}}^{p}(\lambda )={\mathcal{B}}^{p}(\lambda )(\B^d)
\\
:=\left\{
 f\in {\mathcal{H}}({\mathbb{B}}^{d})
 :{\left\Vert{f}\right\Vert}_{p}^{p}
 :=\int_0^1 \int_{\partial \B^d}
 \left\vert f(r\zeta) \right\vert^p d\sigma(\zeta) d\lambda (r)
 <\infty \right\} .
\end{multline*}
\end{definition}
Typical examples are provided by $d\lambda(r)= c_\alpha (1-r^2)^\alpha r^{2d-1}dr$,
where $\alpha >-1$ and $c_\alpha$ is an appropriate normalizing constant; they
correspond to a weight $c_\alpha(1-\sum_{1\le j \le d} |z_j|^2)^\alpha$, $z\in \B^d$.

Let $\lambda $ be a probability measure on $[0,1)^d$, the elements of which
are denoted $r:= (r_1,\dots,r_d)$.  
%The unit circle is denoted by $\mathbb T$,
%so that $\mathbb T^d$ is the distinguished boundary of $\D^d$, 
The torus $(\partial \D)^d$ is endowed with its
normalized Haar measure denoted by $d\theta$.

\begin{definition}
\label{bergman}
The \emph{weighted Bergman space} associated to $\lambda$ is
\begin{multline*}
{\mathcal{B}}={\mathcal{B}}^{p}(\lambda )={\mathcal{B}}^{p}(\lambda )(\D^d)
\\
:=\left\{
 f\in {\mathcal{H}}({\mathbb{D}}^{d})
 :{\left\Vert{f}\right\Vert}_{p}^{p}
 :=\int_{[0,1)^d} \int_{\mathbb T^{d}}
 \left\vert f(r_1 e^{i\theta_1},\dots,r_d e^{i\theta_d}) \right\vert^p d\theta d\lambda (r)
 <\infty \right\} .
\end{multline*}
\end{definition}

Let $H^\infty(\Omega)$ stand for the set of bounded holomorphic functions on  $\Omega$.
In each case, the conditions on $\lambda$ ensure that $H^\infty(\Omega) \subset {\mathcal{B}}^{p}(\lambda )$.

Note that the norms of the monomials are given by moments of the measure $\lambda$.
In the case where $\Omega=\D^d$,
$$
\|z^I\|_p^p =  \int_{[0,1)^d} r^{pI} d\lambda(r ), 
$$
so that $\log (\|z^I\|_p^{-1})$ is a concave function of $I$. 

When $p=2$, % (and $\Omega=\D^d$),
${\mathcal{B}}^{2}(\lambda )$ is a Hilbert space and
the monomials $z^I$ form an orthogonal system. Notice that
in $X_{\omega,2}$, $\|z^I\|_{\omega,2} = \omega(I)^{-1}$, so that 
$$
{\mathcal{B}}^{2}(\lambda )(\D^d) = X_{\omega,2} \mbox{ with } \omega(I) =  
\left(  \int_{[0,1)^d} r^{2I} d\lambda(r ) \right)^{-1/2}.
$$

In the case where $\Omega=\B^d$, 
$$
\|z^I\|_p^p = \left( \int_0^1 r^{p|I|} d\lambda(r ) \right) \left(\int_{\partial \B^d} |\zeta^I|^p d\sigma(\zeta)\right). 
$$
When $p=2$, since the surface measure $d\sigma$ on $\partial \B^d$ desintegrates as an integral of 
Haar measures on tori,  the monomials $(z^J)$ again form an orthogonal system, and in this case
$$
\|z^I\|_2^2=
\left( \int_0^1 r^{2|I|} d\lambda(r ) \right) \omega_2^{\B^d}(J)^{-2},
$$
so that 
$$
{\mathcal{B}}^{2}(\lambda )(\B^d) = X_{\omega,2} \mbox{ with } \omega(I) =  
\left( \int_0^1 r^{2|I|} d\lambda(r ) \right)^{-1/2} \omega_2^{\B^d}(J).
$$
In general, whenever we consider a space $X$, we define the corresponding weight by
$\omega (J):=1/{\left\Vert{z^{J}}\right\Vert}_X$.

\subsection{Main results.}

Let $X$ be a Banach space as above, defined by power series or as a weighted Bergman space.
\begin{definition}
We say that a function $f \in X$ is \emph{cyclic} if for any $g\in X$, there exists
a sequence of holomorphic polynomials $(P_n)$ such that $\lim_{n\to\infty}\|g-P_nf\|_X=0$.
\end{definition}
Note that using the word ``cyclic" is a slight abuse of language, since for $d\ge 2$
we are not iterating a single operator, but taking compositions of the multiplication
operators by each of the coordinate functions $z_1, \dots, z_d$. 
It is, however, a straightforward generalization
of the usual notion of cyclicity under the shift operator $f(z)\mapsto zf(z)$. 

By Lemma \ref{lemzero} in the case of
power series spaces,
or by the mean value inequality in the case of Bergman spaces, 
the point evaluations are continuous, therefore any cyclic $f$ must verify that $f(z)\neq 0$
for any $z \in \Omega$. 

\begin{definition}
\label{omtilde}
\begin{enumerate}
\item
When $\Omega=\D^d$, for any $k\in \N$, let 
$$
\frac1{\tilde \omega (k)} := \sum_{j=1}^d \| z_j^{k}\|_X = \sum_{j=1}^d \| z^{ke_j}\|_X,
$$
where $(e_j)$ stands for the elementary multiindices of $\N^d$: 
$e_1 = (1, 0, \dots, 0)$, $e_2= (0,1,0,\dots,0)$, etc, so $ke_j= (0,\dots,0,k,0,\dots,0)$, with $k$ in the $j$-th place.
\item
When $\Omega=\B^d$, for any $k\in \N$, let 
$$
\frac1{\tilde \omega (k)} :=\sum_{J,\ \left\vert{J}\right\vert
 =k}{\frac{p_{J}}{\omega (J)}},
$$
where $p_J$ is the multinomial coefficient, $p_J:= \frac{|J|!}{J!}$.
\end{enumerate}
\end{definition}

When $X = X_{\omega, p}$ and $\Omega=\D^d$, notice that 
\[
d^{-1} \min_{1\le j \le d} \omega(ke_j) \le \tilde \omega (k)
\le \min_{1\le j \le d} \omega(ke_j).
\]
 Note that if $\omega$ 
satisfies \eqref{weightcond} then
$\log \tilde \omega (k) = o(k)$, but the converse does not hold when $d>1$.

Here are two interesting special cases of our results.

\begin{theorem}
\label{mainthm}
Let $\Omega:= \D^d$ or $\B^d$.

Suppose that $\lim_{k\to\infty} \tilde \omega (k) = \infty$, and  $\omega$ satisfies \eqref{relmono} and \eqref{weightcond},
and that
\begin{equation}
\label{div2}
\sum_{k\ge 1} \left( \frac{\log \tilde \omega (k)}{k} \right)^2 = \infty.
\end{equation}
Let $U \in H^\infty(\Omega)$, verifying $U(z)\neq 0$
for any $z \in \Omega$. 
\begin{itemize}
\item (i)
If $d\ge 1$, $p\ge 1$ and $X= {\mathcal{B}}^{p}(\lambda )$, then $U$ is cyclic in $X$.
\item (ii)
If $\Omega=\D^d$ and $X=X_{\omega,2}$, %and $\omega$ is (relatively) nondecreasing, 
then $U$ is cyclic in $X$.
\end{itemize}
\end{theorem}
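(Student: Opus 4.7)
The plan is to reduce cyclicity of $U$ in $X$ to constructing, for each $\eps > 0$, a holomorphic polynomial $P_\eps$ with $\|1 - P_\eps U\|_X < \eps$. This reduction is valid because polynomials are dense in $X$ (in the series case via truncation of Taylor expansions using $\log \tilde\omega(k) = o(k)$; in the Bergman case by standard dilation arguments) and because multiplication by any fixed polynomial is bounded on $X$. Once a sequence $P_n U \to 1$ in $X$ is in hand, a diagonal argument approximates any $g \in X$ by $Q_n P_n U$ with $Q_n \to g$ polynomial.

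To build $P_\eps$ I would combine a dilation of $U$ with the \emph{special} Corona theorem alluded to in the abstract. For $\rho \in (0,1)$, the dilate $U_\rho(z) := U(\rho z)$ is holomorphic and non-vanishing on a neighborhood of $\overline{\D^d}$, so $1/U_\rho$ is uniformly approximated there by its Taylor polynomials $T_N^\rho$. Writing
\[
1 - T_N^\rho\, U \;=\; \bigl(1 - T_N^\rho\, U_\rho\bigr) + T_N^\rho\,(U_\rho - U),
\]
the first summand tends to $0$ uniformly, and hence in $X$, using $H^\infty(\D^d) \subset X$. The second summand is the delicate one, because $\|T_N^\rho\|_X$ may blow up with $N$, so the decay of $U_\rho - U$ as $\rho\to 1$ must be quantified against this blow-up.

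The needed quantitative estimate comes from the special Corona theorem, applied with one genuinely holomorphic datum (namely $U$) and auxiliary data that are smooth except in one variable and are designed to have small $X$-norm. Concretely, for a parameter $k$, one would choose smooth cut-offs $v_{j,k}$ built from single-variable peaks near the distinguished boundary whose $X$-norms are controlled by $\tilde\omega(k)^{-1}$, and chosen so that $|U| + \sum_j |v_{j,k}|$ is bounded below on $\overline{\D^d}$. The special Corona theorem then produces bounded holomorphic $f_k, g_{j,k}$ with $f_k U + \sum_j g_{j,k} v_{j,k} = 1$, so that
\[
\|1 - f_k U\|_X \;\le\; \sum_j \|g_{j,k}\|_\infty \cdot \|v_{j,k}\|_X \;\lesssim\; \max_j \|g_{j,k}\|_\infty \cdot \tilde\omega(k)^{-1}.
\]
Approximating $f_k$ by polynomials via dilation (as in the first step) then yields the desired $P_\eps$.

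The heart of the argument, and the main technical obstacle, is to bound the corona constants $\|g_{j,k}\|_\infty$ so that the right-hand side above tends to $0$. This requires a Beurling--Carleman-type Tauberian estimate in the spirit of El Fallah--Kellay--Seip, and it is precisely here that the divergence condition \eqref{div2} must be invoked: the sum $\sum_k (\log\tilde\omega(k)/k)^2 = \infty$ guarantees enough slow decay of $\log\tilde\omega(k)/k$ to beat the exponential blow-up that generically plagues Corona constants. The dichotomy between (i) and (ii) reflects which variant of the polydisc Corona theorem is available: the Bergman norm in (i) absorbs the analytic work and so permits arbitrary $d$, whereas the sharp series norm in (ii) forces $d=2$, where a bidisc Koszul-complex argument for the special Corona problem can still be pushed through, and the nondecreasing hypothesis on $\omega$ lets one compare $\tilde\omega(k)$ directly with the coordinate weights $\omega(ke_j)$.
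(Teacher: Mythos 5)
Your high-level strategy (reduce cyclicity to finding $f\in H^\infty(\D^d)$ with $\|1-fU\|_X$ small, then produce $f$ from a Bezout identity $fU+\sum_j g_j v_j=1$ where the auxiliary data $v_j$ have small $X$-norm) is the right skeleton, but the proposal has three genuine gaps. First, the auxiliary data cannot be ``smooth cut-offs'': in a corona/Bezout identity all the data must be holomorphic, and the special Corona theorem used here requires all but one of them to be holomorphic \emph{and} $\mathcal C^1$ up to the boundary. The correct choice is the monomials $z_1^{n},\dots,z_d^{n}$, and the quantitative lower bound you left unspecified is the whole point: by Harnack's inequality applied to $\log|U|^{-1}$ on one-dimensional slices, $|U(z)|+|z_1|^{n}+\cdots+|z_d|^{n}\ge e^{-2c\sqrt n}$ for $n\ge 4c^2$, with $c^2=\log|U(0)|^{-1}$. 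Feeding $\delta=e^{-2c\sqrt n}$ into the Corona estimate gives $\|g_j\|_\infty\lesssim n^{d+1}e^{2c(2d+1)\sqrt n}$, hence $\|1-fU\|_X\lesssim n^{d+1}e^{2c(2d+1)\sqrt n}/\tilde\omega(n)$.

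Second, this single-shot estimate only tends to $0$ when $\log\tilde\omega(n)/\sqrt n$ is unbounded; under the hypothesis \eqref{div2} alone one may well have $\log\tilde\omega(n)=O(\sqrt n)$, and then no choice of $n$ makes the bound small. Your appeal to a ``Beurling--Carleman-type Tauberian estimate'' and to \eqref{div2} ``beating the exponential blow-up'' does not supply a mechanism. What is actually needed (following El Fallah--Kellay--Seip) is to factor $U=\prod_{j=1}^N U^{\lambda_j^2}$ with $\sum\lambda_j^2=1$, apply the corona bound to each fractional power (whose Harnack constant is $c\lambda_j$) with its own exponent $n_{j_0+j}$, and telescope $1-fU=\sum_k(1-U_kf_k)\prod_{j<k}f_jU_j$; condition \eqref{div2} enters only through the extraction of a subsequence $(n_k)$ with $\sum_k(\log\tilde\omega(n_k))^2/n_k=\infty$, $\log\tilde\omega(n_{k+1})\ge 2\log\tilde\omega(n_k)$ and $\log\tilde\omega(n_k)\ge C_0\log n_k$, which makes the telescoped sum small. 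Third, your explanation of the dichotomy (i)/(ii) is incorrect: the same polydisc Corona theorem works for all $d$ in both cases. The real issue is whether $H^\infty(\D^d)$ is a multiplier algebra for $X$: this is trivial for ${\mathcal B}^p(\lambda)$ (giving (i) for all $d$), while for $X_{\omega,2}$ with $\omega$ nondecreasing it follows from Ando's theorem on two commuting contractions, which is what forces $d=2$ in (ii); this ingredient is absent from your argument. Finally, the dilation step with Taylor polynomials of $1/U_\rho$ is unnecessary: once $\|1-f_nU\|_X\to0$ with $f_n\in H^\infty$, the multiplier property plus density of polynomials already converts the $f_n$ into polynomials.
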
 

When we demand a growth condition of a slightly stronger nature on $\tilde \omega$,
we can expand the range of spaces where the result applies.

\begin{theorem}
\label{exproot}
Let $X=X_{\omega,p}$, with $p\ge 2$, or $X= {\mathcal{B}}^{p}(\lambda )$. %There exists a constant $B>0$, depending on $d$ and $p$, such that if 
If $\lim_{k\to\infty} \tilde \omega (k) = \infty$, and  $\omega$ satisfies \eqref{relmono}, \eqref{weightcond}
and
%and there exists an increasing sequence $n_k \to\infty$ such that
\begin{equation}
\label{unifB}
\limsup_k \frac{\log \tilde \omega(k)}{\sqrt k} = \infty,
\end{equation}
then any zero-free $U\in H^\infty (\Omega)$ is cyclic in $X$. 
\end{theorem} 

\subsection{Previous results.}

Many results have been proved for the case $d=1$, and even more for $p=2$. 
In one dimension, $\Omega=\D$ and $\omega = \tilde \omega$ of course. When furthermore $p=2$,
$X_{\omega,2}$ has a norm equivalent to that of a Bergman space if and only if 
$\log \omega(n)$ is a concave function of $n$ \cite[Theorem A.2 and Proposition 4.1]{BorHed}.

In his seminal
monograph \cite{nik}, N. K. Nikolski proved that if $\omega$ is nondecreasing, 
$\lim_{k\to\infty}  \omega (k) = \infty$, $\log \omega (k) = o(k)$, 
$\log \omega(n)$ is a concave function of $n$
and
\begin{equation}
\label{div32}
\sum_{k\ge 1}  \frac{\log \tilde \omega (k)}{k^{3/2}}  = \infty,
\end{equation}
then any zero-free $f\in H^\infty (\D)$ is cyclic in $X_{\omega,2}$. 

Our main inspiration comes from \cite{Seip11}, where O. El Fallah, K. Kellay and K. Seip show, still for 
$d=1$ and $p=2$, that \eqref{div2}, with no condition of concavity, is enough 
to imply cyclicity of any nonvanishing bounded function. Even though \eqref{div2} 
is a stronger condition than  \eqref{div32}, the concavity condition means that there
exist weights to which the new result applies while Nikolski's cannot \cite[Remark 2]{Seip11}. 

The novelty in the present work is of course that we have several variables,
and exponents $p\neq 2$. We also notice that it is not necessary to make use of the inner-outer
factorization: the much easier Harnack inequality suffices. 

\subsection{A Corona-like Theorem.}

As in \cite{Seip11},
our main tool is a version of the Corona Theorem. In full generality, this is still a vexingly open
question in several variables, be it in the ball or the polydisc. However, 
following an earlier result
of Cegrell \cite{Ceg}, a simpler proof \cite{Amar} gives a
Corona-type result in the special case where most of the given generating functions
are smooth. That result   is enough to yield the required estimates in this
instance. For $\Omega$ a bounded domain in $\C^d$, let 
$A^1(\Omega):= \mathcal H(\Omega) \cap \mathcal C^1(\overline \Omega)$. 
%As usual, $H^\infty (\Omega)$ stands for the space of bounded holomorphic
%functions on $\Omega$, and $\|g\|_\infty:= \sup_{z\in\Omega} |g(z)|$. 

\begin{theorem}
\label{corona}
Let $\Omega$ be a bounded pseudoconvex domain in $\C^d$,
such that the equation $\bar \partial u=\omega$, $1\le q \le n$,
admits a solution  $u=S_{q}\omega \in L_{(0,q-1)}^{\infty}(\Omega )$
when $\bar \partial \omega =0$, $\omega \in L_{(0,q)}^{\infty }(\Omega )$, 
%with $u\in L^{\infty }(\Omega )$ if $\omega \in L^{\infty }(\Omega ),$ 
 with the bounds :
 $$ 
 {\left\Vert{u}\right\Vert}_{\infty}\leq E_{q}{\left\Vert{\omega }\right\Vert}_{\infty }.
 $$

There exists a constant $C=C(d,\Omega)$ such that if $N\in \N$, $N\ge 2$,
and if $f_j \in A^1(\Omega)$, $1\le j\le N-1$, $f_N \in H^\infty (\Omega)$,
verify
$$
\sup_{z\in \Omega} \max_{1\le j \le N} |f_j(z)| \le 1,
\quad
\inf_{z \in \Omega}\sum_{j=1}^N |f_j(z)| \ge \delta >0, 
$$
then there exist $g_1, \dots, g_N \in H^\infty (\Omega)$ such that $\sum_{j=1}^N f_j(z)g_j(z)=1$
and for $1\le j\le d$,
$$
\max_{1\le j\le N}\|g_j\|_\infty \le 
C(d,\Omega) N^{4d+2} \frac{\max_{1\le j\le N-1} \| \nabla f_j\|_\infty^{d} }{\delta^{2d+1}}.
$$
\end{theorem}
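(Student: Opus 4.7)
My plan is to apply Hörmander's classical $\bar\partial$-cohomology method for the Corona problem, with the twist that the asymmetric smoothness hypothesis is handled by a preliminary partition of unity built entirely from the smooth data $f_1,\dots,f_{N-1}$. Step one is to construct a bounded smooth solution $(\phi_1,\dots,\phi_N)$ of $\sum_j f_j\phi_j\equiv 1$ in which $f_N$ enters only algebraically. Setting $H:=\sum_{j=1}^{N-1}|f_j|^2$, note that $H\in C^\infty(\overline\Omega)$ and that the Cauchy--Schwarz inequality combined with the corona hypothesis gives $\sqrt{(N-1)H}+|f_N|\ge\delta$. Hence $\Omega$ is covered by $U_1:=\{H>\delta^2/(4N)\}$ and $U_2:=\{|f_N|>\delta/2\}$. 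Choose a smooth cutoff $\chi=\rho(4NH/\delta^2)$ with $\rho$ identically $0$ on $(-\infty,1/2]$ and $1$ on $[1,\infty)$, and set
$$
\phi_j:=\chi\,\bar f_j/H\ \text{for }j\le N-1,\qquad \phi_N:=(1-\chi)/f_N.
$$
This yields $\sum_j f_j\phi_j\equiv 1$, and since $\chi$ depends only on $H$, the derivatives of $f_N$ never appear in $\bar\partial\phi_j$. Elementary bounds using $|\bar\partial H|\le \sqrt{NH}\,\|\nabla f\|_\infty$ give $\|\phi\|_\infty=O(\sqrt N/\delta)$ and $\|\bar\partial\phi\|_\infty=O(N^{3/2}\|\nabla f\|_\infty/\delta^2)$, where $\|\nabla f\|_\infty:=\max_{j\le N-1}\|\nabla f_j\|_\infty$.

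Step two is the Koszul cascade. I look for holomorphic solutions in the form $g_j:=\phi_j-\sum_k f_k h_{jk}$ with $h_{jk}=-h_{kj}$; antisymmetry automatically preserves $\sum_j f_jg_j\equiv 1$, and holomorphy reduces to the antisymmetric system $\bar\partial h_{jk}=w^{(1)}_{jk}:=\phi_j\bar\partial\phi_k-\phi_k\bar\partial\phi_j$. Since $w^{(1)}$ is not $\bar\partial$-closed, one climbs the Koszul complex: for $2\le q\le d$ define totally antisymmetric $(0,q)$-forms $w^{(q)}_{j_0\cdots j_q}$ by the standard recursion (alternating sums of products $\phi_{j_{\sigma(0)}}\bar\partial\phi_{j_{\sigma(1)}}\wedge\cdots\wedge\bar\partial\phi_{j_{\sigma(q)}}$). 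At level $d$ the form $w^{(d)}$ is automatically $\bar\partial$-closed since $(0,d+1)$-forms vanish on $\C^d$, so the hypothesized solution operator $S_d$ produces $h^{(d)}$ with $\bar\partial h^{(d)}=w^{(d)}$ and $\|h^{(d)}\|_\infty\le E_d\|w^{(d)}\|_\infty$. Combining $h^{(d)}$ with the Koszul formula for $w^{(d-1)}$ renders the latter $\bar\partial$-closed as well, so $S_{d-1}$ applies; descend all the way down to $h^{(1)}=h_{jk}$, which by construction satisfies $\bar\partial h_{jk}=w^{(1)}_{jk}$ and therefore makes the $g_j$ holomorphic.

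The final estimate is then a matter of bookkeeping. After $d$ descent steps the bound is of the form
$$
\|g_j\|_\infty\le\|\phi_j\|_\infty+C(d)\,N^{d+1}E_1\cdots E_d\,\|\phi\|_\infty\,\|\bar\partial\phi\|_\infty^{d},
$$
the combinatorial factor $N^{d+1}$ arising from the summations over antisymmetric $(d+1)$-tuples. Substituting the Step 1 bounds yields $\|\phi\|_\infty\|\bar\partial\phi\|_\infty^{d}=O\bigl(N^{(3d+1)/2}\|\nabla f\|_\infty^d/\delta^{2d+1}\bigr)$, and absorbing $E_1\cdots E_d$ into the constant $C(d,\Omega)$ produces the claimed bound $C(d,\Omega)\,N^{4d+2}\,\|\nabla f\|_\infty^d/\delta^{2d+1}$ (the exponent $4d+2$ is not sharp in $N$, but comfortably accommodates the above). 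The main obstacle is conceptual in Step 1 — designing a partition so that $\bar\partial$ never falls on $f_N$ — and arithmetical in Step 3, where one must verify that the $\delta$-exponent is exactly $2d+1$ rather than, say, $3d+1$; this is guaranteed by the fact that $H$ (and not $\sum_{j=1}^N |f_j|^2$) controls the denominators in $\phi$, so that each factor of $\bar\partial\phi$ contributes only one extra power of $1/\delta$ beyond the base bound $\|\phi\|_\infty\sim 1/\delta$.
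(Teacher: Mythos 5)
Your proposal is correct and follows the same two-stage architecture as the paper: first a smooth solution of the Bezout equation built so that $\bar\partial$ never falls on the merely bounded function $f_N$, then the H\"ormander--Koszul cascade run with the hypothesized bounded solution operators $S_q$, the closedness at top degree being automatic because there are no $(0,d+1)$-forms on $\C^d$. The one genuine difference is the first stage: the paper constructs an $N$-piece partition of unity $\{\chi_j\}$ subordinate to the sets where $|f_j|>\delta/(5N)$ (for $j\le N-1$) and where $\sum_{j<N}|f_j|$ is small (so that $|f_N|\ge \delta/2$ there), and takes the initial data $\chi_j/f_j$ (Lemma \ref{partition}); you instead use the H\"ormander weights $\bar f_j/H$ with $H=\sum_{j<N}|f_j|^2$, cut off by a single function $\chi=\rho(4NH/\delta^2)$, and put $(1-\chi)/f_N$ on the complementary region. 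Both constructions rest on the same two facts --- the cutoffs depend only on the smooth data, and $1/f_N$ is holomorphic where it is used --- so the gradient bounds involve only $\max_{j\le N-1}\|\nabla f_j\|_\infty$; your version is slightly more economical and yields a better power of $N$ (about $N^{(5d+3)/2}$ versus the paper's $N^{4d+2}$), which is harmless since the statement only asserts an upper bound. Two small points to repair in a final write-up: the sign in $w^{(1)}_{jk}$ should be $\phi_k\bar\partial\phi_j-\phi_j\bar\partial\phi_k$ (with your choice one gets $\bar\partial g_j=2\bar\partial\phi_j$ rather than $0$), and in the descent you should say explicitly that what is $\bar\partial$-closed at level $q$ is $w^{(q)}$ minus the $f$-contraction of the already-constructed $h^{(q+1)}$ --- this is exactly the induction the paper carries out with its correction forms $\alpha_{k,l}$, and it is what justifies applying $S_q$ at each step.
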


Note that the polydisc and the ball verify the hypotheses of the theorem. 

\subsection{Structure of the paper.}
First we clarify the easy relationship between weights and  domains of convergence in Section \ref{ConvDom}.
Then we gather some preliminary results and a first reduction of the problem in Section \ref{auxil}.
Theorem \ref{corona} is proved in Section \ref{coronita}, and used in the proofs of 
the two main theorems.
The relatively easy proof of Theorem \ref{exproot} is given in Section \ref{proof1}. 
Theorem \ref{mainthm} will follow from a more general and more technical result, Theorem \ref{metathm},
which is stated and proved in Section \ref{proof2}. 

\section{Domains of convergence}
\label{ConvDom}

\begin{lemma}
\label{hardyweight}
Let $f$ be holomorphic on the unit ball $\B^d$, represented by the Taylor 
expansion $f(z)= \sum_J a_J z^J$. Then $f \in H^2(\B^d)$ %(the Hardy space of the ball) 
if and only if 
$$
\sum_{J\in \N^d} \left(\frac{|a_J|}{\omega^{\B^d}_2(J)}\right)^2 < \infty, 
\mbox{ where } (\omega^{\B^d}_2(J))^{-2} = \frac{(d-1)! J!}{(|J|+d-1)!}.
$$
\end{lemma}
\begin{proof}
The surface measure $d\sigma$ on $\partial \B^d$ desintegrates as an integral of 
Haar measures on tori, so the monomials $(z^J)$ form an orthogonal system in $H^2(\B^d)$,
which is a basis since the polynomials are dense in the space. 
Then $\|f\|^2_{H^2(\B^d)}= \sum_J |a_J|^2 \|z^J\|^2_{H^2(\B^d)}$.
%We set  $\omega_2(J)^{-1}:= \|z^J\|_{H^2(\B^d)}$.  
The explicit value of $\|z^J\|_{H^2(\B^d)}=(\omega^{\B^d}_2(J))^{-1}$
(Definition \ref{stdweight}) can be found in 
\cite[p. 12]{RudinBall}.
\end{proof}

As an immediate consequence of this Lemma and of the remarks before Definition
\ref{stdweight}, if $X_{\omega, p}$ is  as in Definition \ref{series} and $p\ge 2$,
and if for all $J$, $\omega^\Omega_2(J)\le \omega(J)$, then $H^2(\Omega) \subset X_{\omega, p}$.

\begin{definition}
\label{gauge}
For $z \in \C^d$, let $|z|_{\D^d}:= \max_{1\le j \le d} |z_j|$, 
$|z|_{\B^d}^2:= \sum_{1\le j \le d} |z_j|^2$. 
\end{definition}
In each case, $\Omega=\{z: |z|_\Omega<1\}$.

%\begin{lemma}
%\label{convdom}
%Suppose the weight $\omega$ verifies the relative monotonicity condition \eqref{relmono}
%and that for any $ J \in \N^d$,
%\begin{equation}
%\label{relgrowth}
% \omega(J) \le  \omega^\Omega_2(J) \exp(o(|J|)),
%\end{equation}
%then for any $f \in X_{\omega, p}$, the series defining $f$ converges on $\Omega$; 
%and there is no larger domain on which every $f \in X_{\omega, p}$ has to be holomorphic.
%\end{lemma}

\begin{proof*}{\it Proof of Lemma \ref{lemzero}.}
The last statement follows from the fact that \eqref{relmono} implies that $H^2(\Omega) \subset X_{\omega, p}$,
as in the remark before the Definition.

To prove the convergence of the series,
 write $f(z)= \sum_J a_J z^J$ and take a point $z$ such that $|z|_\Omega=\rho<1$.
 Since $X_{\omega, p} \subset X_{\omega, \infty}$, $|a_J| \preceq \omega(J)$ and it will be enough
to prove the convergence of $\sum_J  \omega(J) |z^J|$.  

In the case where $\Omega=\D^d$, then
\[
\log ( \omega(J) |z^J|) \le |J| \log \rho + o(|J|) \le -\eta |J|
\]
for some $\eta>0$ when $|J|$ is large enough, so the general term
is dominated by the general term of a convergent geometric (multi)series.

In the case where $\Omega=\B^d$, for any $k\in\N$,
\[
|z|_\Omega^{2k}= \sum_{J: |J|=k} \frac{|J|!}{J!} |z^{2J}|.
\]
%Recall that for $J=(j_1, \dots, j_d)$, $|J|=j_1+\cdots +j_d$, $J!= j_1! \cdots j_d!$,
%$2J=(2j_1, \dots, 2j_d)$.

First consider only sums of terms with all powers even:
\begin{multline*}
\sum_{J: |J|=k} \omega(2J) |z^{2J}| \le \sup_{J: |J|=k} \left( \frac{\omega(2J) J!}{|J|!}\right)
\sum_{J: |J|=k} \frac{|J|!}{J!} |z^{2J}| 
\\
\le \sup_{J: |J|=k} \left( \frac{\omega(2J) J!}{|J|!}\right) \rho^{2k},
\end{multline*}
so using \eqref{weightcond}, we need to estimate $\frac{\omega_2(2J)^2 (J!)^2}{(|J|!)^2}$.

Stirling's formula implies that for any $n \in \N$, 
\[
\log (n!)= n(\log n -1) + o(n),
\]
so we have, for $|J|=k$,
\begin{multline*}
\log \left( \frac{\omega_2(2J)^2 (J!)^2}{(|J|!)^2} \right) =
\log \left( \frac{(2k + d-1)! (J!)^2}{(d-1)! (2J)! (k!)^2 } \right) =
\\
= (2k + d-1) \left( \log (2k + d-1) -1 \right) -2 k (\log k -1) 
\\
+ 2 \sum_{i=1}^d j_i ( \log j_i -1) - \sum_{i=1}^d 2 j_i ( \log (2j_i) -1)
+o(k)
\\
= 2k \log 2 - 2 \sum_{i=1}^d j_i \log2 + 2k \left( \log (k + \frac{d-1}2) -\log k \right) +o(k)
\\
= 0+ O(1) + o(k) = o(k).
\end{multline*}

This proves that $\sum_{J: |J|=k} \omega(2J) |z^{2J}|$ is dominated by the general
term of a convergent geometric series for $k$ large enough. 

Now consider a general $J$ such that $|J|=k$: then $J=2J'+K$, with $j'_i= 2 \left[j_i/2\right]$,
and $K \in \{ 0,1\}^d$. Let $2J'':= J+K$. 
Condition \eqref{relmono} shows that $\omega(J) \asymp \omega(2J') \asymp \omega(2J'')$, 
$|z^J| \le |z^{2J'}|$,
and each $J'$ corresponds to at most $2^d$ different multi-indices $J$. So 
$\sum_{J: |J|=k} \omega(J) |z^{J}|$ is dominated by the general
term of a convergent geometric series for $k$ large enough.

Continuity of the evaluation map follows, for instance, from the Dominated Convergence Theorem applied
to the series.
\end{proof*}

\begin{lemma}
\label{factorbound}
Suppose that $H^\infty(\Omega)$ is a multiplier space for $X$; or that $X=X_{\omega, p}$, with  $p\ge 2$ and
$\omega$ verifying \eqref{relmono}. Let $g\in H^\infty(\Omega)$, $K \in \N^d$. 
Then 
\[
\|z^K g \|_{X_{\omega, p}} 
\le \frac{C_m }{\omega(K)} \|g\|_{H^\infty(\Omega)}.
\]
% \le \frac{C_m }{\omega(K)} \|g\|_{H^2(\Omega)}
\end{lemma}

Observe that in the special case $K=0$, $X=X_{\omega, p}$, we get back the fact that $H^2(\Omega) \subset X_{\omega, p}$.
\begin{proof}
Under the first assumption, we immediately have
$$
\|z^K g \|^p_{X} \le \|z^K\|^p_{X_{\omega, p}}\| g \|_{H^\infty(\Omega)} = \frac{1 }{\omega(K)} \|g\|_{H^\infty(\Omega)}.
$$

Under the second assumption, by scaling we may assume $1=\|g\|_{H^2(\Omega)}\le  \|g\|_{H^\infty(\Omega)}$. 
%Then \|g \|_{X_{\omega, p}} \le \|g\|_{H^2(\B^d)} \le \|g\|_{H^\infty(\B^d)} =1$.
Let $g(z)= \sum_J a_J z^J$. Then $\sup_J \frac{|a_J|^2}{\omega_2 (J)^2}\le 1$.
%Since $X_{\omega, p} \subset H^2(\B^d) \subset H^\infty(\B^d)$,
%there exists $C>0$ independent of $g$ such that $|a_J| \le C \omega(J)$. 
Then 
\begin{multline*}
\|z^K g \|^p_{X_{\omega, p}} = \sum_J \frac{|a_J|^p}{\omega (J+K)^p} 
\le \left( \sup_J \frac{\omega_2 (J)}{\omega (J+K)} \right)^p 
%\left( \sup_J \frac{|a_J|}{\omega_2 (J)} \right)^{p-2} 
\sum_J \frac{|a_J|^p}{\omega_2 (J)^p}
\\
\le \frac{C_m^p}{ \omega (K)^p}  \sum_J \frac{|a_J|^2}{\omega_2 (J)^2} 
=
\frac{C_m^p}{ \omega (K)^p} \le   \frac{C_m^p}{ \omega (K)^p}\|g\|^p_{H^\infty(\Omega)}.
\end{multline*}
\end{proof}

\section{Auxiliary results}
\label{auxil}

\subsection{Multiplier property.}

\begin{definition}
\label{multipliers}
We shall say that $H^\infty(\Omega)$ is a \emph{multiplier algebra} for $X$ if there exists $C_m>0$ such that
$$
\forall f \in X, \forall g \in H^\infty(\Omega), gf \in X \mbox{ and }
\|gf\|_X \le C_m \|g\|_\infty \|f\|_X.
$$
\end{definition}
Notice that, since constants are in $X$, this implies that $H^\infty(\Omega) \subset X$.

It is immediate that $H^\infty(\Omega)$ is a multiplier algebra for each 
${\mathcal{B}}^{p}(\lambda)(\Omega)$, with $C_m=1$.  In the case of $X_{\omega, p}$, 
writing $\omega^\Omega_\infty(I):=\|z^I\|_{L^\infty(\Omega)}^{-1}$, an obvious necessary
condition is that 
\begin{equation}
\label{almostmono}
C_m \omega (I+J) \ge \omega^\Omega_\infty(I) \omega(J),
\end{equation}
but sufficient conditions are not so easy to state in general.

Observe that \eqref{almostmono} is very similar to \eqref{relmono}. In fact,
%when $\Omega={\D^d}$, 
$\omega^{\D^d}_\infty(I)=\omega^{\D^d}_2(I)=1$ for all $I$, and %when $\Omega={\B^d}$, 
one can show that 
\[
\omega^{\B^d}_2(I)\ge \omega^{\B^d}_\infty(I) \ge \omega^{\B^d}_2(I) - O(\log |I|),
\]
by an appropriate minoration of $|z^I|$ on a strip of $\partial {\B^d}$ of width comparable to $|I|$
around its maximum modulus set (we omit the details; this can provide an alternate
proof of Lemma \ref{lemzero} without recourse to Stirling's formula). 

\subsection{Some tools.}
Our first technical tool is a bound from below for the modulus of a zero-free bounded holomorphic function. 

For $z\in \Omega$, $z^*:= z/|z|_\Omega\in \partial \Omega$, where $|z|_\Omega$
is as in Definition \ref{gauge}.
\begin{lemma}
\label{harnack}
Let $U$ be a zero-free holomorphic function on $\Omega$ 
such that $ {\left\Vert{U}\right\Vert}_{\infty}\leq 1,$ and $z\in \Omega$.
Let $c^{2}:= \log \frac{1}{\left\vert{U(0)}\right\vert }.$
Suppose $k\ge 4c^2$.
 Then, for $\Omega=\D^d$,
 $$ \left\vert{U(z)}\right\vert +|z_1|^k +\cdots+ |z_d|^k \geq e^{-2c{\sqrt{k}}},
 $$ 
and for $\Omega=\B^d$,
$$
\left\vert{U(z)}\right\vert + \sum_{|J|=k}  |f_J(z)| \ge e^{-2c\sqrt {2k}},
$$
where $f_J(z):=p_J z^{2J} = \frac{|J|!}{J!} z^{2J}$.
\end{lemma}
\begin{proof}
The conclusion is obvious if $z=0$. If not,
define a holomorphic function on $\D$ by
 $f_{z^{*}}(\zeta ):=U(\zeta z^{*})$.
Then
\begin{itemize}
\item
$\ {\left\Vert{f_{z^{*}}}\right\Vert}_{\infty }\leq 1$;
\item
$f_{z^{*}}(0)=U(0)$;
\item
$\forall \zeta \in {\mathbb{D}},\ f_{z^{*}}(\zeta )\neq 0$;
\item
$ f_{z^{*}}(|z|_\Omega)=U(z).$
\end{itemize}
The Harnack inequality applied to the positive harmonic function
 $ \log \ \left\vert{f_{z^{*}}}\right\vert ^{-1}$ shows that
$$
 \left\vert{f_{z^{*}}(\zeta )}\right\vert \geq 
 \exp  {\left({-\frac{1+\left\vert{\zeta }\right\vert }{1-\left\vert{\zeta}\right\vert }\log \ \frac{1}{\left\vert{U(0)}\right\vert }}\right)}
 \ge 
\exp 
 {\left({-\frac{2 }{1-\left\vert{\zeta}\right\vert }\log \ \frac{1}{\left\vert{U(0)}\right\vert }}\right)} .
 $$

The computation implicit at the beginning of the proof of \cite[Lemma 3]{Seip11}
shows that $\inf_{\D} |f_{z^{*}}(\zeta)| + |\zeta|^k \ge e^{-2c\sqrt k}$
as soon as $k\ge 4c^2$; applying this 
to $\zeta = |z|_\Omega$,  we find
$$
\left\vert{U(z)}\right\vert +|z|_\Omega^k \geq  f_{z^{*}}(|z|_{\Omega}) + |z|_{\Omega}^k \geq e^{-2c\sqrt k}.
$$
In the case where $\Omega=\D^d$, this yields
$$
\left\vert{U(z)}\right\vert +|z_1|^k +\cdots+ |z_d|^k \geq  
\left\vert{U(z)}\right\vert +(\max_{1\le j \le d} |z_j| )^k \ge e^{-2c\sqrt k}.
$$
In the case where $\Omega=\B^d$, 
%we define
%\begin{equation}
%\label{deffj}
%f_J(z):=p_J z^{2J} = \frac{|J|!}{J!} z^{2J} .
%\end{equation}
substituting $2k$ for $k$, we obtain
$$
\left\vert{U(z)}\right\vert +(|z_1|^2 +\cdots+ |z_d|^2)^k =  
\left\vert{U(z)}\right\vert + \sum_{|J|=k}  |f_J(z)| \ge e^{-2c\sqrt {2k}}.
$$
\end{proof}

\begin{lemma}
\label{density}
If $X=X_{\omega, p} $  or ${\mathcal{B}}^{p}(\lambda )$ from Definitions \ref{series} or \ref{bergman}
or \ref{bergmanball}
respectively, then the space of polynomials $\C[Z]:=\C[Z_1, \dots, Z_d]$ is dense 
in $X$.
\end{lemma}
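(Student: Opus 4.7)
The argument splits naturally according to the two families of spaces.

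For $X = X_{\omega,p}$, the proof is essentially immediate from the definition. Given $f = \sum_I a_I z^I \in X_{\omega,p}$, the Taylor truncations $S_N(z) := \sum_{|I|\le N} a_I z^I$ are polynomials, and
$$
\|f - S_N\|_{X_{\omega,p}}^p = \sum_{|I|>N} \left(\frac{|a_I|}{\omega(I)}\right)^p
$$
is the tail of a convergent series, hence tends to $0$ as $N \to \infty$.

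For $X = \mathcal{B}^p(\lambda)$, I plan the classical dilation argument. Set $f_t(z) := f(tz)$ for $t \in (0,1)$. There are two steps: (a) show $\|f - f_t\|_p \to 0$ as $t \to 1^-$, and (b) approximate each $f_t$ by polynomials in $\mathcal{B}^p$-norm. Step (b) is easy: $f_t$ extends holomorphically to a neighborhood of $\overline{\D}^d$, so its Taylor polynomials converge to it uniformly on $\overline{\D}^d$; since $\lambda$ is a probability measure, $H^\infty(\D^d)$ embeds continuously into $\mathcal{B}^p(\lambda)$ with norm $\le 1$, and uniform convergence passes to $\mathcal{B}^p$-convergence.

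Step (a) is where the work is, and I would do it by dominated convergence applied to
$$
\|f-f_t\|_p^p = \int_{[0,1)^d}\int_{\mathbb{T}^d} \bigl|f(re^{i\theta}) - f(tre^{i\theta})\bigr|^p d\theta\, d\lambda(r).
$$
For each fixed $r \in [0,1)^d$, the inner integrand tends to $0$ by uniform continuity of $f$ on a compact neighborhood of the torus $r\,\mathbb{T}^d$ inside $\D^d$. To produce an $L^1(d\lambda)$ dominant, I would invoke the plurisubharmonicity of $|f|^p$: applying the one-variable subharmonic mean-value inequality successively in each coordinate shows that $s \mapsto \int_{\mathbb{T}^d}|f(s r e^{i\theta})|^p d\theta$ is nondecreasing on $(0,1]$ for each fixed $r$, whence the inner integrand is bounded above by $2^p \int_{\mathbb{T}^d}|f(re^{i\theta})|^p d\theta$. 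This majorant is $\lambda$-integrable, with total mass $2^p\|f\|_p^p < \infty$, so dominated convergence concludes step (a). The only mildly delicate point is the monotonicity of the radial $L^p$-means used to produce the dominant; the remainder is routine.
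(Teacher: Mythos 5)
Your argument is correct and follows essentially the same route as the paper's proof: Taylor truncation handles $X_{\omega,p}$ immediately, and for $\mathcal{B}^p(\lambda)$ you use the dilation $f_t$, dominated convergence with the monotonicity of the radial means of the plurisubharmonic function $|f|^p$ supplying the integrable majorant, and uniform approximation of $f_t$ by its Taylor polynomials. No gaps to report.
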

\begin{proof}
By construction, the polynomials are dense in $X_{\omega, p} $. 

For $f\in \mathcal{B}^{p}(\lambda)(\D^d)$, $r=(r_1,\dots,r_d) \in [0,\infty)^d$, let
$$
m_{r,p}(f) := \int_{\mathbb T^{d}}
 \left\vert f(r_1 e^{i\theta_1},\dots,r_d e^{i\theta_d}) \right\vert^p d\theta 
$$
denote the mean of $|f|^p$ on the torus $\mathbb T(r )$ of multiradius $r$. Since $|f|^p$ is plurisubharmonic,
this is an increasing function with respect to each component of $r$. In particular, if we set
for any $\gamma \in (0,1)$, $f_\gamma (z):= f(\gamma z)$, $m_{r,p}(f_\gamma) \le m_{r,p}(f)$
for each $r$.

We claim that $\lim_{\gamma \to 1} \|f-f_\gamma\|_{\mathcal{B}^{p}(\lambda)} =0$. Indeed,
$\|f-f_\gamma\|_{\mathcal{B}^{p}(\lambda)} = \int_{[0,1)^d} F_\gamma(r ) d\lambda (r) $,
where
$$
F_\gamma(r ) := \int_{\mathbb T^{d}}
 \left\vert f(r_1 e^{i\theta_1},\dots,r_d e^{i\theta_d})
 -f_\gamma (r_1 e^{i\theta_1},\dots,r_d e^{i\theta_d}) \right\vert^p d\theta .
$$
Since $|f-f_\gamma|^p \le C_p (|f|^p+|f_\gamma|^p)$,
$$
F_\gamma(r ) \le C_p ( m_{r,p}(f) + m_{r,p}(f_\gamma) ) \le 2 C_p  m_{r,p}(f)  \in L^1 (d\lambda).
$$
Since $f_\gamma \to f$ uniformly on the torus $\mathbb T ( r )$ for each $r$ as $\gamma \to 1$, 
$F_\gamma(r )\to 0$ for each $r$, and we can apply
Lebesgue's Dominated Convergence theorem. 

For each $\gamma \in (0,1)$, $f_\gamma$ is holomorphic on a larger polydisc, so can
be uniformly approximated by truncating its Taylor series. 

When $\Omega=\B^d$, we can perform an analogous (and simpler) argument.
\end{proof}

%\begin{lemma}
%\label{compnorm}
%For any $p\ge 2$, and any $\omega$ satisfying \eqref{almostmono}, $\|f\|_{\omega,p} \le C \|f\|_{H^2}$.
%\end{lemma}
%\begin{proof}
%$$
%\|f\|_{\omega,p}^{p}=\sum_{J}{\frac{\left\vert{a_{J}}\right\vert
% ^{p}}{\omega (J)^{p}}}
% \leq 
% \frac{C_m^{p}}{\omega (0)^{p}}\sum_{J}{\left\vert{a_{J}}\right\vert^{p}}
% \leq 
%\frac{C_m^{p}}{\omega (0)^{p}}(\sum_{J}{\left\vert{a_{J}}\right\vert^{2}})^{p/2} = 
%\frac{C_m^{p}}{\omega (0)^{p}}\|f\|_{H^2}^p.
%$$
%\end{proof}

\subsection{First reduction.}

We begin by showing that it is enough to obtain a relaxed version of the conclusion.
\begin{lemma}
\label{reduction}
Let $U\in H^\infty(\Omega)$ be a non-vanishing function.

If either:
\begin{itemize}
\item 
(i) $H^\infty(\Omega)$ is a multiplier algebra for $X$,
\item
or (ii) $X=X_{\omega,p}$, $p\ge 2$ and \eqref{relmono} is satisfied,
\end{itemize}
and if there exists a sequence $(f_n) \subset H^\infty(\Omega)$
such that 
$$\lim_{n\to\infty} \|1-f_nU\|_X=0,
$$
then $U$ is cyclic in $X$.
%there exists a sequence $(P_n) \subset \C[Z]$ such that $\lim_{n\to\infty}\|1-P_nU\|_X=0$.
\end{lemma}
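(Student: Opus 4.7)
The plan is to let $V \subset X$ denote the closure in $X$ of $\{P U : P \in \C[Z_1,\dots,Z_d]\}$ and prove $V = X$. Observe first that multiplication by any fixed polynomial $Q$ is a bounded operator on $X$: in case (1) this is immediate since $Q \in H^\infty(\D^d)$, while in case (2) it follows directly from \eqref{almostmono} applied monomial by monomial. Consequently, if $P_n U \to 1$ in $X$, then $QP_n U \to Q$ for every polynomial $Q$; since $QP_n$ is itself a polynomial, this places every polynomial in $V$, and by Lemma \ref{density} we obtain $V = X$. So it suffices to show $1 \in V$.

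By hypothesis $\|1-f_n U\|_X \to 0$, so it is enough to approximate each $f_n U$ in $X$ by polynomial multiples of $U$. Write $h_\rho(z):=h(\rho z)$ for $\rho \in (0,1)$. Since $(f_n)_\rho$ extends holomorphically past $\overline{\D^d}$, its Taylor polynomials $P_{n,\rho,m}$ tend to $(f_n)_\rho$ uniformly on $\overline{\D^d}$ as $m\to\infty$. A diagonal argument on $(\rho,m)$ will then produce polynomials $Q_k$ with $Q_k U \to 1$, provided we establish
\begin{enumerate}[label=(\alph*)]
\item $\|((f_n)_\rho - P_{n,\rho,m}) U\|_X \to 0$ as $m\to\infty$ (fixed $n, \rho$),
\item $\|(f_n - (f_n)_\rho) U\|_X \to 0$ as $\rho \to 1$ (fixed $n$).
\end{enumerate}

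In case (1), (a) is immediate from the multiplier property applied to $(f_n)_\rho - P_{n,\rho,m} \in H^\infty(\D^d)$, while (b) follows by using $(f_n U)_\rho = (f_n)_\rho U_\rho$ to split
\[
\|(f_n - (f_n)_\rho) U\|_X \le \|f_n U - (f_n U)_\rho\|_X + C_m \|f_n\|_\infty \|U - U_\rho\|_X;
\]
both terms vanish as $\rho \to 1$ by continuity of dilation in $X$ (immediate from dominated convergence on Taylor coefficients for $X_{\omega,p}$, and established in the proof of Lemma \ref{density} for $\mathcal{B}^p(\lambda)$).

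Case (2) is the main obstacle, since no multiplier property of $H^\infty$ on $X$ is assumed. It is circumvented by Lemma \ref{compnorm} combined with the elementary fact that $H^\infty$ multiplies $H^2$: estimate (a) becomes
\[
\|((f_n)_\rho - P_{n,\rho,m})U\|_{\omega,p} \le C\|((f_n)_\rho - P_{n,\rho,m})U\|_{H^2} \le C\|(f_n)_\rho - P_{n,\rho,m}\|_\infty \|U\|_{H^2} \to 0,
\]
and (b) becomes $\|(f_n-(f_n)_\rho)U\|_{\omega,p} \le C\|f_n-(f_n)_\rho\|_{H^2}\|U\|_\infty \to 0$ as $\rho \to 1$, since dilation is continuous in $H^2$ (dominated convergence on Taylor coefficients again).
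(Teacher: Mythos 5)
Your proof is correct and follows essentially the same route as the paper: reduce to approximating the constant $1$ and then every polynomial via boundedness of multiplication by polynomials (the multiplier property in case (1), \eqref{almostmono} monomial by monomial in case (2)), with case (2) handled exactly as in the paper through Lemma \ref{compnorm} and the fact that $H^\infty(\D^d)$ multiplies $H^2$ with norm $\|U\|_\infty$, and then density of polynomials (Lemma \ref{density}). The only difference is cosmetic: where the paper approximates $f_n$ (resp.\ $Pf$) by polynomials directly in the $X$-norm (resp.\ in $H^2$ by Taylor truncation), you pass through dilations $f\mapsto f_\rho$ and uniform approximation on $\overline{\D^d}$, which is legitimate since continuity of dilations holds in these spaces (dominated convergence on Taylor coefficients for $X_{\omega,p}$ and $H^2$, and the argument in the proof of Lemma \ref{density} for $\mathcal{B}^p(\lambda)$).
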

\begin{proof}
By Lemma \ref{density}, it is enough to show that we can approximate 
any polynomial $P$. 

Let us show that it is enough to prove that for any $\varepsilon>0$, there exists $Q\in \C[Z]$ 
such that $\|1-Q U\|_X \le \eps$.

Let $P(z):= \sum_{|J|\le N} a_J z^J$, then
$$
\|P-PQU\|_X =\|P(1-QU)\|_X \le \|P\|_\infty \|1-Q U\|_X
$$
in the case of assumption (i), and
$$
\|P(1-QU)\|_X \le  \sum_{|J|\le N} |a_J| \|z^J(1-QU)\|_X
\le C_m  \left( \sum_{|J|\le N} \frac{|a_J|}{\omega_2^\Omega(J)} \right)\|1-QU\|_X,
$$
in the case of assumption (ii), and each upper bound
can be made arbitrarily small by choosing $Q$. 

In the case of assumption (i), let us then show that the constant function $1$ can be approximated. 
Let $\eps>0$. Take $f\in H^\infty(\D^d)$ such that $\|1-fU\|_X<\eps/2$. 
By Lemma \ref{density}, we can choose $Q\in \C[Z]$ such that $\|f-Q\|_X \le
\frac1{ C_m\|U\|_\infty } \frac\eps2$, where $C_m$ is as in Definition \ref{multipliers}.
Then
$$
\|1-Q U\|_X \le \|1-fU\|_X + \|U(f-Q)\|_X < \frac\eps2 + C_m \|U\|_\infty  \|f-Q\|_X \le \eps.
$$

In the case of assumption (ii), again take $f$ so that $\|1-fU\|_{\omega,p}$ is small, then
because $H^2(\Omega) \subset X_{\omega, p}$,
%by Lemma \ref{factorbound} applied with $K=0$,
$$
\|fU-QU\|_{\omega,p} \le C \|fU-QU\|_{H^2} \le C \|U\|_\infty \|f-Q\|_{H^2} ,
$$
and this last quantity can be made arbitrarily small by taking $Q$ a Taylor expansion
of $f$ for instance.
\end{proof}

\section{Proof of Theorem \ref{exproot}}
\label{proof1}

\begin{proof*}{\it Proof of Theorem \ref{exproot}.}

Observe that if $X= {\mathcal{B}}^{p}(\lambda )$, then $H^\infty(\Omega)$ is a multiplier algebra, so Lemma \ref{factorbound}
always applies here.

{\bf Case 1: $\Omega=\D^d$.}

Let $c^{2}:=-\log \ \left\vert{U(0)}\right\vert$ and $B >2c(2d+1)$. 
By the hypothesis of the theorem,
there exists a strictly increasing sequence $(n_k)_{k\ge1}$ such that
for all $k$, $\log \tilde \omega(n_k) \ge B \sqrt{n_k}$.
 
By Lemma \ref{harnack} and Theorem \ref{corona}, we get 
$g_{j} \in H^{\infty }({\mathbb{D}}^{d})$, for $j=1,\dots,d+1$, such that
%for any $z\in \D^d$,
\begin{equation}
\label{bezout}
 g_{d+1}U+g_{1}z_{1}^{n_k}+ \cdots +g_{d}z_{d}^{n_k}=1,
\end{equation}
and 
\begin{equation}
\label{corest}
 \forall j=1, \dots,\ d+1,\ {\left\Vert{g_{j}}\right\Vert}_\infty
 \leq C(d)n_k^{d}e^{2c(2d+1){\sqrt{n_k}}}.
\end{equation}
Set $f_{k}:=g_{d+1},$ we get,
%$$
% {\left\Vert{1-f_{k}U}\right\Vert}_{\omega ,2}\leq
% \sum_{j=1}^{d}{{\left\Vert{g_{j}z_{j}^{n_k}}\right\Vert}_{\omega ,2}}
%$$
%and using the fact that $H^\infty(\D^d)$ is a multiplier algebra for $X$, 
 using Lemma \ref{factorbound}, 
\begin{equation}
\label{1fU}
{\left\Vert{1-f_{k}U}\right\Vert}_X \leq
 \sum_{j=1}^{d}{{\left\Vert{g_{j}z_{j}^{n_k}}\right\Vert}_X}
 \leq 
 C_m \sum_{j=1}^{d}\frac{\left\Vert{g_{j}}\right\Vert_{\infty}}{\omega(n_k e_j)}
 \leq C_m \frac{C(d)n_k^{d}e^{2c(2d+1){\sqrt{n_k}}}}{\tilde \omega(n_k)}.
\end{equation}

By the choice of $B$, this tends to $0$ as $k\to\infty$.
It only remains to apply lemma \ref{reduction} to conclude. 
\smallskip

{\bf Case 2: $\Omega=\B^d$.}

Let $c$, $(n_k)$ be as above and  $B >2c \sqrt2 (2d+1)$.

By Theorem \ref{corona}, we will get 
$g_0, g_{J} \in H^{\infty }({\mathbb{D}}^{d})$, for $|J|=n_k$, such that
%for any $z\in \D^d$,
\begin{equation}
\label{bezoutB}
 g_{0}U+ \sum_{|J|=n_k} g_J f_J \equiv 1,
\end{equation}
where $f_J$ is as in Lemma \ref{harnack}. 

We need to estimate the size of the $g_J, g_0$.  

First $\sum_{|J|=n_k} |f_J(z)| =|z|_\Omega^{2n_k} <1$.

The number of terms in the Bezout equation is 
$$
N=N(d,k)= \# \left\{ J \in \N^d: |J|=n_k\right\}= \frac{(n_k+d-1)!}{n_k! (d-1)!} \le n_k^{d-1}.
$$
We also need 
 ${\left\Vert{\nabla f_{J}}\right\Vert}_{\infty }.$ 
 We have
\[
 \frac{\partial }{\partial z_{i}}f_{J}=p_{J}2j_{i}\frac{z^{2J}}{z_{i}}\Rightarrow
 \nabla f_{J}=p_{J}z^{2J}(\frac{2j_{1}}{z_{1}},...,\frac{2j_{d}}{z_{d}})
 =2f_{J}(z)(\frac{j_{1}}{z_{1}},...,\frac{j_{d}}{z_{d}}).
 \] 
If we set $\tilde J:=(\max (0,2j_{1}-1),...,\max (0,2j_{d}-1))$
 and 
\newline
$\tilde z_{i}:=z_{1}\cdot \cdot \cdot z_{i-1}\hat z_{i}z_{i+1}\cdot
 \cdot \cdot z_{d},$ where $\displaystyle \hat z_{i}$ is omitted,
 then 
 \[
 \nabla f_{J}(z)=2p_{J}z^{\tilde J}(j_{1}\tilde
 z_{1},...,j_{d}\tilde z_{d}).
 \]
 So we get, because $\left\vert{\tilde z_{i}}\right\vert \leq 1$ in the ball,
 \[
 \left\vert{\nabla f_{J}(z)}\right\vert \leq 2p_{J}\left\vert{z^{\tilde
 J}}\right\vert \sum_{i=1}^{d}{j_{i}\left\vert{\tilde z_{i}}\right\vert
 }\leq 2p_{J}\left\vert{z^{\tilde J}}\right\vert \left\vert{J}\right\vert
 .
 \]
But if we write $J':= ((j_1-1)_+,\dots,(j_d-1)_+)$, then $|z^{\tilde J}| \le |z^{2J'}|$
and $\sum_{|J'|=n_k-d} p_{J'} |z^{2J'}|  <1$. Furthermore, 
\[
p_J \le \frac{n_k(n_k-1) \cdots (n_k-d+1)}{j_1 \cdots j_d} p_{J'} \le n_k^d p_{J'}.
\]  
All together then, $\|\nabla f_{J}(z)\|_\infty \le C(d) n_k^{d+1}$.

By Lemma \ref{harnack} (in the case of the ball), 
$$
\delta = \inf_{z\in \B^d} \left( |U(z)| + \sum_{|J|=n_k} |f_J(z)| \right) \ge e^{-2c\sqrt {2n_k}}.
$$ 
Gathering the estimates, we get 
\begin{equation}
\label{corestB}
\|g_J\|_\infty \le C(d) N(d,k)^{4d+2} e^{2c (2d+1)\sqrt {2n_k}} \le C(d) n_k^{5d^2} e^{2c (2d+1)\sqrt {2n_k}}.
\end{equation}
Then let $f_k=g_0$ (at the $n_k$ step)
\begin{multline}
\label{1fUB}
{\left\Vert{1-f_{k}U}\right\Vert}_X \leq
 \sum_{|J|=n_k} \|g_J f_J\|_X \le C_m \sum_{|J|=n_k} p_J \|z^{2J}\|_X \|g_J \|_\infty 
 \\
 \leq C_m \frac{C(d)n_k^{5d^2}e^{2c(2d+1){\sqrt {2n_k}}}}{\tilde \omega(n_k)},
\end{multline}
and we finish as before.
\end{proof*}

\section{Proof of Theorem \ref{mainthm}}
\label{proof2}

\subsection{Main intermediate result.}

\begin{theorem}
\label{metathm} 
Let $X$ be a Banach space as in Definitions \ref{series}, \ref{bergmanball} or \ref{bergman}. Suppose that 
$H^\infty(\D^d)$ is a multiplier algebra for $X$. Suppose also that 
$\lim_{k\to\infty} \tilde \omega (k) = \infty$, that $\log \tilde \omega (k) = o(k)$,
and that conditions  \eqref{relmono}, \eqref{weightcond} and \eqref{div2} hold.

Then any $U \in H^\infty(\D^d)$, verifying $U(z)\neq 0$
for any $z \in \D^d$ is cyclic in $X$.
%there exists a sequence $(P_n) \subset \C[Z]$ such that $\lim_{n\to\infty}\|1-P_nU\|_X=0$. 
\end{theorem}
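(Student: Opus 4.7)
The plan is to apply Lemma \ref{reduction}(1) (justified because $H^\infty(\D^d)$ is, by hypothesis, a multiplier algebra for $X$), thereby reducing the statement to the existence of a sequence $(f_n)\subset H^\infty(\D^d)$ with $\|1-f_nU\|_X\to 0$. Set $c^2:=-\log|U(0)|$. For each integer $n\ge 4c^2$, Lemma \ref{harnack} supplies the lower bound $\inf_{\D^d}\bigl(|U|+\sum_j|z_j|^n\bigr)\ge e^{-2c\sqrt n}$, so Theorem \ref{corona} applied to $(U,z_1^n,\dots,z_d^n)$ produces $h_n,g_{n,1},\dots,g_{n,d}\in H^\infty(\D^d)$ with
\[
h_nU+\sum_{j=1}^d g_{n,j}z_j^n=1,\qquad \max\bigl(\|h_n\|_\infty,\max_j\|g_{n,j}\|_\infty\bigr)\le M_n:=C(d)n^{d+1}e^{2c(2d+1)\sqrt n}.
\]
Under the stronger hypothesis \eqref{unifB}, $M_n/\tilde\omega(n)\to 0$ along a subsequence and Lemma \ref{multzj} would give $\|1-h_nU\|_X\to 0$ directly (this is essentially the proof of Theorem \ref{exproot}); under the weaker \eqref{div2}, however, no single scale suffices, and a multi-scale iteration is required.

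Given finite scales $4c^2\le n_1<\cdots<n_K$, the plan is to define $F_0:=0$ and $F_k:=F_{k-1}+(1-F_{k-1}U)h_{n_k}$ for $k\ge 1$. A short induction gives $F_k\in H^\infty(\D^d)$ and $1-F_kU=(1-F_{k-1}U)(1-h_{n_k}U)$, so that
\[
1-F_KU \;=\; \prod_{k=1}^K R_{n_k},\qquad R_n:=\sum_{j=1}^d g_{n,j}z_j^n.
\]
Expanding yields $1-F_KU=\sum_{\iota\in\{1,\dots,d\}^K}G_\iota\, z^{I_\iota}$ with $G_\iota:=\prod_k g_{n_k,\iota_k}$ (so $\|G_\iota\|_\infty\le\prod_k M_{n_k}$) and $I_\iota:=\sum_k n_ke_{\iota_k}\in\N^d$ (so $\max_j(I_\iota)_j\ge(n_1+\cdots+n_K)/d$). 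Peeling off coordinates one at a time via the multiplier property of $H^\infty(\D^d)$ gives $\|z^I\|_X\le C_m^{d-1}\tilde\omega(\max_j I_j)^{-1}$ for every $I\in\N^d$, which, combined with $\|G_\iota z^{I_\iota}\|_X\le C_m\|G_\iota\|_\infty\|z^{I_\iota}\|_X$, yields a quantitative bound of the shape
\[
\|1-F_KU\|_X \;\le\; C\, d^K\,\prod_{k=1}^K M_{n_k}\cdot\frac{1}{\tilde\omega\bigl((n_1+\cdots+n_K)/d\bigr)},
\]
possibly sharpened by using the distinctness of the multi-indices $I_\iota$ (so that different terms of the expansion contribute to orthogonal Taylor bands in the Hilbert case, or to disjoint blocks in the Bergman case).

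The main obstacle, and the technical heart of the proof, will be the combinatorial choice of the scales $(n_k)$ and of the step count $K$ forcing the right-hand side to tend to zero under only \eqref{div2}. Equipartition $n_1=\cdots=n_K=n$ would reduce the requirement to the stronger \eqref{unifB}, so the scales must be distributed non-uniformly; one exploits the divergent series $\sum(\log\tilde\omega(k)/k)^2$—for instance through a dyadic decomposition of $\N$, or a summation-by-parts argument—to locate ranges of $k$ where $\log\tilde\omega(k)/k$ is atypically large and thus able to beat the multiplicative Corona growth $\prod_k M_{n_k}\approx\exp\bigl(C\sum_k\sqrt{n_k}\bigr)$. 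The multi-dimensional structure of the cross-terms (those $\iota$ which are non-constant, yielding multi-indices $I_\iota$ spread across several coordinates rather than concentrated on one) is expected to furnish the extra room needed to carry this balance through beyond the single-variable $\sqrt n$ barrier present in the analysis of Theorem \ref{exproot}.
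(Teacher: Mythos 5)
Your reduction via Lemma \ref{reduction}(1) and the single-scale use of Lemma \ref{harnack} together with Theorem \ref{corona} are exactly as in the paper, and they do settle the case $\sup_k \log\tilde\omega(k)/\sqrt{k}=\infty$. But the substance of Theorem \ref{metathm} is the complementary case $\log\tilde\omega(k)\le B\sqrt{k}$ for all $k$, and there the multi-scale iteration you propose cannot close, for a quantitative reason you can see in advance. In your identity $1-F_KU=\prod_{k=1}^K R_{n_k}$ every term $G_\iota z^{I_\iota}$ of the expansion is estimated by (a constant times) $\prod_k M_{n_k}\cdot\tilde\omega\bigl(\max_j (I_\iota)_j\bigr)^{-1}$; the cost satisfies $\log\prod_k M_{n_k}\approx 2c(2d+1)\sum_k\sqrt{n_k}$, while the gain is at most $\log\tilde\omega\bigl(\max_j(I_\iota)_j\bigr)\le B\sqrt{n_1+\cdots+n_K}\le B\sum_k\sqrt{n_k}$. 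Hence as soon as $2c(2d+1)\ge B$ --- that is, for any zero-free $U$ with $|U(0)|$ small enough --- the bound you produce does not tend to zero, no matter how the scales $n_k$ and the number $K$ of steps are chosen, and condition \eqref{div2} never gets a chance to act. The hoped-for help from the cross terms goes the wrong way: a non-constant $\iota$ spreads the degree over several coordinates, \emph{decreases} $\max_j(I_\iota)_j$ and hence weakens the gain, while orthogonality of the monomial blocks only improves the combinatorial factor $d^K$, which is negligible next to the exponential mismatch. The structural defect is that at every scale you pay the Harnack constant of the \emph{full} function $U$, so your scheme can never get past the $\sqrt{k}$ barrier; in effect it reproves Theorem \ref{exproot} and leaves the actual content of Theorem \ref{metathm} open.

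The missing idea, which is how the paper (following El Fallah--Kellay--Seip) proceeds, is to split $U$ itself rather than merely iterate scales: since $U$ is zero-free one may write $U=\prod_{j=1}^N U_j$ with $U_j:=U^{\lambda_j^2}$ and $\sum_j\lambda_j^2=1$, so that the Harnack constant of $U_j$ is $c\lambda_j$ and the corona data $(U_j,z_1^{n_{j_0+j}},\dots,z_d^{n_{j_0+j}})$ costs only $\exp\bigl(2c(2d+1)\lambda_j\sqrt{n_{j_0+j}}\bigr)$ plus the polynomial factor of \eqref{corest}. Choosing $\lambda_j$ proportional to $\alpha_j:=\log\tilde\omega(n_{j_0+j})/\sqrt{n_{j_0+j}}$ along a subsequence produced by Lemma \ref{subseq} (divergence of $\sum_k\alpha_k^2$, doubling of $\log\tilde\omega(n_k)$, and $\log\tilde\omega(n_k)\ge C_0\log n_k$ to absorb the factor $n^{d+1}$) makes each cost a small power of $\tilde\omega(n_{j_0+j})$; it is exactly \eqref{div2} that guarantees one can reach $\sum_j\lambda_j^2=1$ with all $\alpha_j$ bounded. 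Finally one must use the telescoping \emph{sum} $1-\prod_j f_jU_j=\sum_k(1-f_kU_k)\prod_{j<k}f_jU_j$ rather than a product of remainders, so that each summand carries its own gain $\tilde\omega(n_{j_0+k})^{-1}$ against an accumulated cost which, thanks to the doubling condition, is roughly $\tilde\omega(n_{j_0+k})^{1/2}$. Without this factorization of $U$ and the accompanying selection lemma, your proposal has a genuine gap precisely where the theorem is hard.
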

\begin{proof}

Now we need to distinguish two cases according to the growth of $\omega(k)$.

{\bf Case 1:} $\sup_k \frac{\log \tilde \omega(k)}{\sqrt k} = \infty$. 

Then Theorem \ref{exproot} applies.

%We perform the same proof as in the proof of Theorem \ref{exproot},
%using the fact that $H^\infty(\D^d)$ is a multiplier algebra for $X$,
%instead of Lemma \ref{factorbound}, to get directly 
%$\|z^J g\|_X \le \frac{C_m}{\omega (J)} \|g\|_\infty$.

\smallskip

{\bf Case 2:} $\sup_k \frac{\log \tilde \omega(k)}{\sqrt k} = B < \infty$. 
To deal with this more delicate case, we shall need the full power of the proof scheme 
in \cite{Seip11}. Since our Corona-like estimates are slightly different from those
in dimension $1$, we first need a refined version of \cite[Lemma 1]{Seip11}.

\begin{lemma}
\label{subseq}
Let $\tilde \omega$ be as in Theorem \ref{metathm}. Let $C_0>0$. Then there exists
a strictly increasing sequence $(n_k)_{k\ge1}$ such that 
$$
\sum_{k\ge 1}  \frac{\left(\log \tilde \omega (n_k)\right)^2}{n_k}  = \infty,
$$
and for all $k$,
$\log \tilde \omega (n_{k+1}) \ge 2 \log \tilde \omega (n_k)$ 
and $\log \tilde \omega (n_k) \ge C_0 \log n_k$.
\end{lemma}
The last condition is the only novelty with respect to \cite[Lemma 1]{Seip11}.
\begin{proof}
First notice that there exists an infinite set $E \subset \N^*$ such that
for all $n\in E$, $\log \tilde \omega (n) \ge C_0 \log n$.
Indeed, if not, for $n$ large enough, we would have 
$$
\log \tilde \omega (n) \le C_0 \log n \le n^{1/4},
$$
and \eqref{div2} would be violated.

Now let $n_0=1$ and if $n_j$ is defined, let
\begin{multline*}
n'_{j+1} := \min \left\{ n>n_j : \log \tilde \omega (n) \ge 2 \log \tilde \omega (n_j) \right\},
\\
n_{j+1} := \min \left\{ n>n_j, n\in E : \log \tilde \omega (n) \ge 2 \log \tilde \omega (n_j) \right\}.
\end{multline*}
Obviously, $n_j < n'_{j+1}  \le n_{j+1}$. We claim that 
$$
S:= \sum_{j\ge0} \sum_{k=n'_j}^{n_j-1} \left( \frac{\log \tilde \omega (k)}{k} \right)^2 <\infty.
$$
Accepting the claim, the proof finishes as in \cite{Seip11}:
\begin{multline*}
\sum_{k\ge 1} \left( \frac{\log \tilde \omega (k)}{k} \right)^2
 \le S+ \sum_{j\ge0} \sum_{k=n_j}^{n'_{j+1}-1} \left( \frac{\log \tilde \omega (k)}{k} \right)^2
 \\
\le S+ \sum_{j\ge0} 4 \left(\log \tilde \omega (n_j)\right)^2 \sum_{k=n_j}^{n'_{j+1}-1} \frac1{k^2}
\le S+ 4 \sum_{j\ge0} \frac{\left(\log \tilde \omega (n_j)\right)^2}{n_j-1},
\end{multline*}
so the last sum must diverge.

We now prove the claim. If $n'_j \le k <n_{j}$, then $n \notin E$, so for $j$ large enough and 
$n'_j \le k <n_{j}$, $\log \tilde \omega (k) \le k^{1/4}$, thus
\begin{equation}
\label{resid}
\sum_{k=n'_j}^{n_j-1} \left( \frac{\log \tilde \omega (k)}{k} \right)^2 
\le \sum_{k\ge n'_j} \frac1{k^{3/2}} \le \frac2{\sqrt{n'_j}-1}.
\end{equation}
The definition of $n_j$ implies that $\tilde \omega (n_j) \ge C_0 2^j$, and $n'_{j+1}  \notin E$ (if it 
is distinct from $n_{j+1}$) so
$$
C_0 \log n'_{j+1} > \log \tilde \omega (n'_{j+1}) \ge 2 \log \tilde \omega (n_j) \ge C_0 2^j,
$$
and the series with general term the last expression in \eqref{resid} must converge. 
\end{proof}
We follow the proof of \cite[Theorem 1]{Seip11}, with a couple of wrinkles.

Choose $A:= \max(2, \log C(d))$ where $C(d)$ is the constant in \eqref{corest} when
$\Omega=\D^d$ (resp. \eqref{corestB} when $\Omega=\B^d$). 
Then  for $c^2=-\log|U(0)|$ as above, let 
$C_1^2:= (8\sqrt2 (2d+1) A c)^2 +B^2$. We choose $C_0 \ge C_1/c$ when
$\Omega=\D^d$ (resp. $C_0 \ge \frac{5d^2 C_1}{2\sqrt2 (2d+1) c}$ when $\Omega=\B^d$), and define the sequence $(n_j)$
as in Lemma \ref{subseq} above. For any given $j_0\in \N$, let
$\alpha_j^2:= \frac{\left(\log \tilde \omega (n_{j_0+j})\right)^2}{n_{j_0+j}} $,
$$
N:= \min \left\{ M: \sum_{j=1}^M \alpha_j^2 \ge (8\sqrt2 (2d+1) A c)^2 \right\},
$$
$\lambda_j:= \alpha_j \left( \sum_{i=1}^N \alpha_i^2\right)^{-1/2}$. 

Notice that for any $j$, $\alpha_j \le B$ by the hypothesis of Case 2, and that
$$
\sum_{i=1}^N \alpha_i^2 \le \sum_{i=1}^{N-1} \alpha_i^2 + \alpha_N^2 \le (8\sqrt2 (2d+1) A c)^2 +B^2 = C_1^2,
$$
so that $\lambda_j \ge \alpha_j/C_1$. Clearly, $\lambda_j \le \alpha_j/(8\sqrt2 (2d+1) A c)$.

We write $U_j:=U^{\lambda_j^2}$, so that $U= \prod_{j=1}^N U_j$.
As above, choose $f_{j}:=g_{d+1}$  satisfying \eqref{bezout} and \eqref{corest}, but with
$U_j$ instead of $U$ and $n_{j_0+j}$ instead of $n_k$. 
The quantity $c$ must then be replaced by $c \lambda_j$. 

When $\Omega=\D^d$, the bound \eqref{corest} can be rewritten
\begin{equation}
\label{fjbd}
\|f_j\|_\infty \le \exp \left( 2 c (2d+1) \lambda_j \sqrt{n_{j_0+j}} + d \log n_{j_0+j} + \log C(d) \right).
\end{equation}
Notice that 
$$
c \lambda_j \sqrt{n_{j_0+j}} \ge \frac{c}{C_1}\log \tilde \omega (n_{j_0+j})
\ge  \frac{c C_0}{C_1}\log n_{j_0+j} \ge \log n_{j_0+j}, 
$$
by our choice of $C_0$, so that 
\begin{equation}
\label{fjbdA}
\|f_j\|_\infty \le \exp A \left( 2 c (2d+1) \lambda_j \sqrt{n_{j_0+j}} + 1 \right).
\end{equation}

We finish as in \cite{Seip11}. Let $f:= \prod_{j=1}^N f_j$. Since
\begin{multline*}
1-fU = 1- \prod_{j=1}^N f_j U_j = \sum_{k=1}^N (1-U_kf_k) \prod_{j=1}^{k-1} f_j U_j,
\\
\|1-fU \|_X \le C_m \sum_{k=1}^N \|1-U_kf_k\|_X \prod_{j=1}^{k-1} \|f_j U_j\|_\infty,
\end{multline*}
which by \eqref{1fU} becomes
\begin{multline}
\label{sumom}
\le C_m \sum_{k=1}^N \frac{C(d)n_{j_0+k}^{d}e^{2c(2d+1){\sqrt{n_{j_0+k}}}}}{\tilde \omega(n_{j_0+k})} \prod_{j=1}^{k-1} \|f_j \|_\infty
\\
\le 
C_m \sum_{k=1}^N \frac1{\tilde \omega(n_{j_0+k})} 
\exp \left( A \sum_{j=1}^k (2 c (2d+1) \lambda_j \sqrt{n_{j_0+j}} + 1) \right),
\end{multline}
and using the growth of $\log \tilde \omega(n_{j}) $ obtained in Lemma \ref{subseq},
\begin{multline*}
\le 
C_m \sum_{k=1}^N \frac1{\tilde \omega(n_{j_0+k})} 
\exp \left( Ak+  \sum_{j=1}^k \frac1{4} \log \tilde \omega(n_{j_0+j}) \right)
\\
\le
C_m \sum_{k=1}^N \exp \left( Ak - \frac1{2} \log \tilde \omega(n_{j_0+k}) \right).
\end{multline*}
Now choose $j_0$ such that $\log \tilde \omega(n_{j_0}) \ge A$, the sum above
has terms with better than geometric decrease, so is bounded by $\tilde \omega(n_{j_0+1})^{-1/2}$,
which can be made arbitrarily small by choosing $j_0$ large enough.

When $\Omega=\B^d$, we need to make the changes indicated at the beginning of the argument,
and replace the bound \eqref{fjbd} by the following:
\[
\|f_j\|_\infty \le \exp \left( 2 c \sqrt2 (2d+1) \lambda_j \sqrt{n_{j_0+j}} + 5d^2 \log n_{j_0+j} + \log C(d) \right).
\]
Then the choice (for $\Omega=\B^d$) of $C_0$ implies that 
$2 c \sqrt2 (2d+1) \lambda_j \sqrt{n_{j_0+j}} \ge 5d^2 \log n_{j_0+j}$, and this leads again to \eqref{fjbdA}.
In the succession of majorations that follow, \eqref{sumom} becomes
\begin{multline*}
\le C_m \sum_{k=1}^N \frac{C(d)n_{j_0+k}^{5d^2}e^{2c\sqrt2 (2d+1){\sqrt{n_{j_0+k}}}}}{\tilde \omega(n_{j_0+k})} \prod_{j=1}^{k-1} \|f_j \|_\infty
\\
\le 
C_m \sum_{k=1}^N \frac1{\tilde \omega(n_{j_0+k})} 
\exp \left( A \sum_{j=1}^k (2 c (2d+1) \lambda_j \sqrt{n_{j_0+j}} + 1) \right),
\end{multline*}
and the proof concludes in the same way.
\end{proof}

\subsection{Proof of Theorem \ref{mainthm}.}

We now obtain cyclicity results as soon as we can prove that $H^\infty(\D^d)$ 
is a multiplier algebra on the 
space $X$. As remarked after Definition \ref{multipliers}, this is always the case when
$X={\mathcal{B}}^{p}(\lambda)$. So we obtain Theorem \ref{mainthm} (i).

When  $X=X_{\omega,2}$ and $\omega$ is relatively nondecreasing, then
Lemma \ref{relabs} reduces us to the nondecreasing case,  
where the multiplication operators by each
$z_j$  are commuting contractions on a Hilbert space. Von Neumann's inequality
was generalized by Ando in the case of two contractions, and to an arbitrary number
of weighted shifts by Michael Hartz \cite{Ha}: this is precisely our situation.
It implies that for any polynomial $f$, and thus for any $f\in H^\infty(\D^d)$,
$\|f g\|_X \le \|f\|_\infty \|g\|_X$. So we obtain Theorem \ref{mainthm} (ii).

\section{Proof of the Corona theorem with smooth data}
\label{coronita}

We begin by constructing a partition of unity which exploits the smoothness of the data.

Because of the corona hypothesis, and $ f_{j}$
 is continuous up to the boundary of $\Omega$,
 for $1\le j\leq N-1,$ we have that  $
 g(z):=\sum_{j=1}^{N-1}{\left\vert{f_{j}(z)}\right\vert }$ is
 continuous in $\Omega$, and even Lipschitz with a constant controlled by 
 $\max_{1\le j \le N-1}\|\nabla f_j\|_\infty$.
  
Set 
$$
U'_{N}:=\lbrace z\in \bar \Omega :g(z)<\frac{N-1}{4N}\delta
 \rbrace
 \mbox{ and }
U_{N}:=\lbrace z\in \bar \Omega :g(z)<\frac{N-1}{2N}\delta
 \rbrace ,
 $$ 
and
$$
U_{j}:=\lbrace z\in \bar \Omega:\left\vert{f_{j}}\right\vert
 >\frac{\delta }{5N}\rbrace
 \mbox{ and }
U_{j}':=\lbrace z\in \bar \Omega:\left\vert{f_{j}}\right\vert >\frac{\delta
 }{4N}\rbrace .
 $$
Then $U'_{j}\Subset U_{j},$ $1 \le j \le N$.

\begin{lemma}
\label{partition}
There exist $C_1>0$ and 
$ \chi _{j}\in {\mathcal{C}}^{\infty }_{c}(U_{j})$, $j=1,...,N$,
such that for $z\in \Omega$, $0\leq \chi _{j}\leq 1,$ $\sum_{j=1}^{N}{\chi _{j}(z)}=1$,
%relative to the covering $\displaystyle \lbrace U_{j}\rbrace _{j=1,...,N}$ of $\displaystyle \bar \Omega $ 
and
\begin{multline*}
 \left\vert{\frac{\chi _{j}}{f_{j}}}\right\vert \leq \frac{C_1 N}{\delta }, \ 
 %\quad
 \|\nabla \chi_j \|_\infty \le \frac{C_1 N^2}\delta \max_{1\le i \le N-1}\|\nabla f_i\|_\infty, j=1,\dots,N, \ 
%  \|\nabla \chi_N \|_\infty \le \frac{C_1}\delta ,
\\
 \max_{1\le j \le N} \sup_{z\in \Omega} \frac{|\nabla \chi_j(z)|}{|f_j(z)|}
 \le C_1 \frac{ N^3}{\delta^2 } \max_{1\le i \le N-1}\|\nabla f_i\|_\infty,
\end{multline*}
 where $C_1$ is an absolute constant.
\end{lemma}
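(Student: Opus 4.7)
My strategy is to construct explicit Lipschitz cutoffs adapted to the pairs $(U_j', U_j)$, normalize them into a partition of unity, and mollify at the end to recover the demanded $C^\infty$ regularity. All three quantitative bounds will follow from tracking constants through the construction together with the corona hypothesis $\sum_j|f_j|\ge\delta$.

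For $j=1,\dots,N-1$, pick a smooth $\rho:[0,\infty)\to[0,1]$ with $\rho\equiv 0$ on $[0,1/(5N)]$ and $\rho\equiv 1$ on $[1/(4N),\infty)$, with $\|\rho'\|_\infty \le 20N$, and set $\tilde\phi_j(z):=\rho(|f_j(z)|/\delta)$; similarly let $\tilde\phi_N(z):=\sigma(g(z)/\delta)$ with $\sigma\equiv 1$ on $[0,(N-1)/(4N)]$ and $\sigma\equiv 0$ on $[(N-1)/(2N),\infty)$. Each $\tilde\phi_j$ is Lipschitz with $\|\nabla\tilde\phi_j\|_\infty\le (CN/\delta)\max_{1\le i\le N-1}\|\nabla f_i\|_\infty$ (using $\|\nabla g\|_\infty\le (N-1)\max_i\|\nabla f_i\|_\infty$). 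By design $\tilde\phi_j\equiv 1$ on $\overline{U_j'}$ and $\mathrm{supp}(\tilde\phi_j)\subset\overline{U_j}$. A pigeonhole argument gives $\overline\Omega=\bigcup_j\overline{U_j'}$: if $z\notin\overline{U_N'}$ then $g(z)\ge\delta(N-1)/(4N)$, whence some $|f_j(z)|\ge\delta/(4N)$ for $j<N$. Consequently $\Sigma:=\sum_j\tilde\phi_j$ satisfies $1\le\Sigma\le N$ on $\overline\Omega$, and the renormalization $\tilde\chi_j:=\tilde\phi_j/\Sigma$ is a Lipschitz partition of unity with $\mathrm{supp}(\tilde\chi_j)\subset\overline{U_j}$.

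For the first estimate, $|f_j|\ge\delta/(5N)$ on $\mathrm{supp}(\tilde\chi_j)$ for $j<N$; while for $j=N$, $\mathrm{supp}(\tilde\chi_N)\subset U_N$ combined with the corona inequality forces $|f_N|\ge\delta/2$ there, giving $|\tilde\chi_j/f_j|\le 5N/\delta$ uniformly. For the second, the quotient rule $\nabla\tilde\chi_j=\nabla\tilde\phi_j/\Sigma-\tilde\phi_j\nabla\Sigma/\Sigma^2$ together with $\Sigma\ge 1$, $0\le\tilde\phi_j\le 1$, and $|\nabla\Sigma|\le(CN^2/\delta)\max_{1\le i\le N-1}\|\nabla f_i\|_\infty$ yields $\|\nabla\tilde\chi_j\|_\infty\le (CN^2/\delta)\max_{1\le i\le N-1}\|\nabla f_i\|_\infty$. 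The third estimate follows immediately by dividing by the above lower bound on $|f_j|$ over $\mathrm{supp}(\nabla\tilde\chi_j)\subset\mathrm{supp}(\tilde\chi_j)$.

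The main obstacle is the $C^\infty$ regularity: since $f_i$ is only in $A^1(\Omega)$, $|f_i|$ (and hence $g$) fails to be smooth at zeros of $f_i$, so $\tilde\phi_N$ is in general only Lipschitz. To upgrade, I would either replace $|f_i|$ by the smooth proxy $\sqrt{|f_i|^2+\eta^2}$ in the definition of $\tilde\phi_N$, with $\eta$ chosen small enough that the covering by $\overline{U_j'}$ is preserved, or convolve the final $\tilde\chi_j$ with a bump of radius much smaller than $\mathrm{dist}(\mathrm{supp}(\tilde\phi_j),\partial U_j)$ and renormalize. In either case the three estimates persist up to absolute constants, furnishing the $C_1$ of the statement.
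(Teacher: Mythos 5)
Your construction is correct and follows essentially the same route as the paper: cutoffs obtained by composing one-variable bump functions with $|f_j|$ (for $j<N$) and with $g$ (for $j=N$) adapted to the pairs $(U_j',U_j)$, the pigeonhole covering of $\bar\Omega$ so that the sum of the cutoffs is at least $1$, normalization, and the lower bounds $|f_j|\ge \delta/(5N)$ on $U_j$ for $j<N$ and $|f_N|\ge \frac{N+1}{2N}\delta$ on $U_N$ coming from the corona hypothesis, yielding the same three estimates with absolute constants. Your explicit handling of the non-smoothness of $g$ at zeros of the $f_j$ (e.g.\ via the regularized modulus $\sqrt{|f_j|^2+\eta^2}$) is actually more careful than the paper, which simply composes $g$ with a smooth one-variable function.
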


\begin{proof}
We can construct a function $\psi _{N}\in  {\mathcal{C}}_{c}^{\infty }(U_{N})$ such that $0\le \psi_{N}\le 1$ and  $\psi_{N}\equiv 1$ on 
$ U'_{N}$, with $\|\nabla \psi _{N}\|_\infty \le \frac{C}\delta$, for instance
 by composing $|g|$ with an appropriate smooth one-variable function.

We have 
$$ {\mathcal{O}}:=\bar U'_{N}\cup \bigcup_{j=1}^{N-1}{U_{j}'}\supset
 \bar \Omega,
$$
because for $z\notin \bigcup_{j=1}^{N-1}{U_{j}'},$ then
$$
\forall j=1,...,N-1,\ \left\vert{f_{j}(z)}\right\vert
 \leq \frac{\delta }{4N}\Rightarrow \sum_{j=1}^{N-1}{\left\vert{f_{j}(z)}\right\vert
 }\leq \frac{N-1}{4N}\delta \Rightarrow z\in \bar U'_{N}.
 $$
Now we construct a partition of unity $\lbrace \chi_{j}\rbrace _{j=1,...,N}$ 
subordinated to $\lbrace U_{j}\rbrace $ in the usual way: we take a nonnegative function
$ \psi _{j}\in {\mathcal{C}}^{\infty }_{c}(U_{j})$ such that $\psi _{j}\le 1$
everywhere and $\psi_j\equiv 1$
 on $\displaystyle U_{j}'$, with $\|\nabla \psi _{j}\|_\infty \le C \frac{N}\delta \|\nabla f_{j}\|_\infty$. 
 We set
$$
\chi _{j}:=\frac{\psi _{j}}{\sum_{k=1}^{N}{\psi_{k}}}.
$$ 
Since $\sum_{k=1}^{N}{\psi _{k}}\geq 1,$  we
 have $ 0\leq \chi _{j}\leq 1$, $\chi _{j}\in
 {\mathcal{C}}^{\infty }_{c}(U_{j})$ and $\chi_{1}+\cdots +\chi _{N}=1$ on $ \bar \Omega$
 and 
 $$
\|\nabla \chi _{j}\|_\infty \le C \frac{N^2}\delta \max_{1\le i \le N-1}\|\nabla f_i\|_\infty. 
$$

This yields a partition of unity such that$\frac{\chi _{j}}{f_{j}} \in {\mathcal{C}}^{\infty }(\bar \Omega)$
for $1 \le j \le N$ and  for $ j\leq N-1$,
$\left\vert{\frac{\chi _{j}}{f_{j}}}\right\vert \leq \frac{5N}{\delta }$,
because $\supp \chi _{j}\subset U_{j}$, where 
 $\left\vert{f_{j}}\right\vert >\frac{\delta}{5N}$ and $\chi _{j}\leq 1.$
 
For $j=N$ on the other hand, we have $\supp \chi_{N}\subset U_{N}$ and, 
by the corona hypothesis, 
$$
 z\in U_{N}\Rightarrow \left\vert{f_{N}(z)}\right\vert
 \geq \delta -g(z)=\delta -\frac{N-1}{2N}\delta =\frac{N+1}{2N}\delta
 $$ 
hence 
$$\left\vert{\frac{\chi _{N}}{f_{N}}}\right\vert
 \leq \frac{2N}{(N+1)\delta }\leq \frac{5N}{\delta }.
$$
An analogous reasoning yields the bound on $\frac{|\nabla \chi_j|}{|f_j|} $, $1\le j \le N$. 
 \end{proof}

\begin{proof*}{\it Proof of Theorem \ref{corona}.}
We shall now go through the Koszul complex method, introduced in
 this context by H\"ormander \cite{HormCor67}, to obtain the explicit bounds we need.
We follow the notations of \cite{AmMenDiv00}.

Let $\wedge^{k}({\mathbb{C}}^{N})$ be the exterior
 algebra on $ {\mathbb{C}}^{N},$ let $
 e_{j},\ j=1,...,N,$ be the canonical basis of 
 $\wedge^{1}({\mathbb{C}}^{N}),$ and 
 $ e_{\alpha }:=e_{\alpha_{1}}\wedge \cdots \wedge e_{\alpha _{k}}$, 
 $\alpha_{j}\in \lbrace 1,\dots, N\rbrace,$ the associated basis of $
 \wedge^{k}({\mathbb{C}}^{N}).$
 
Let $L_{r}^{k}$ be the space of bounded and infinitely differentiable
differential
 forms in $\Omega $ of type $(0,r)$
 with values in $\wedge^{k}({\mathbb{C}}^{N}).$
The norm on these spaces is defined to be the maximum of the uniform norms of 
 the coefficients.
 
We define two linear operators on $L_{r}^{k}$.
$$ 
\forall \omega \in L_{r}^{k},\quad R_{f}(\omega )
:=\omega \wedge \sum_{j=1}^{N}\frac{\chi_j}{f_j}e_{j} \in L_{r}^{k+1}.
 $$
We see that $\|R_f \omega\| \le C_f \|\omega\|$, with 
\begin{equation}
\label{normR}
C_f:= N \sup_{1\le j\le N, z\in \Omega} \left| \frac{\chi_j(z)}{f_j(z)}\right|.
\end{equation}
The operator $d_{f}: L_{r}^{k+1} \longrightarrow L_{r}^{k}$ is defined by induction and
 linearity.
For $\omega \in L_{r}^{0}$, $d_{f} \omega =0$.
To define the operator on $L_{r}^{1}$, set $ d_{f}(e_{j}):=f_{j}$ and extend by linearity.

To define $d_f$ on $L_{r}^{k+1}$, for $e_{\alpha }\in \wedge^{k}({\mathbb{C}}^{N})$, $1\le j \le d$, set
$$
d_{f}(e_{\alpha }\wedge e_{j}):=f_{j}e_{\alpha }-d_{f}(e_{\alpha })\wedge e_{j}\in L_{r}^{k}.
$$
It follows that $\|d_f\|_{\mathcal L (L_{r}^{k+1},L_{r}^{k})} \le C(k) \max_{1\le j \le N}\|f_j\|_\infty$.

It is easily seen by induction that $d_{f}^{2}=0$,
 $\bar \partial d_{f}\omega =d_{f}\bar \partial \omega $ and
$$
d_{f}\omega =0\Rightarrow d_{f}(R_{f}\omega )=\omega ,
$$
i.e. $\lambda=R_{f}\omega $ is a solution to
 the equation $d_{f}\lambda =\omega $ 
 when the necessary condition $d_{f}\omega =0$ is verified.

Together with the operator $\bar \partial : L_{r}^{k} \longrightarrow L_{r+1}^{k} $, we
 have a double complex, whose elementary squares are commutative
 diagrams. 
 
 We now construct by induction, for $0\le k\le N$, forms $\omega_{k,l} \in L^k_l$
and $\alpha_{k,l} \in L^{k+1}_l$, where $l\le k \le l+1$. 

We start with $\omega_{0,0}=1$, 
$$
\omega _{1,0}:= R_f(\omega_{0,0})
= \sum_{j=1}^{N}{\frac{\chi_{j}}{f_{j}}e_{j}}\in L_{0}^{1}.
$$
Then, if $\omega_{k,k-1}$ is given, we set $\omega_{k,k}:= \bar \partial \omega_{k,k-1}$; if 
$\omega_{k,k}$ is given, we set $\omega_{k+1,k}:= R_f \omega_{k,k}$. This construction stops
for $k=d$ since there are no $(0,d+1)$ forms on $\C^d$. 

{\bf Claim.}
For any $k\ge 0$, $d_f \omega_{k+1,k} =\omega_{k,k}$.

We prove the claim by induction. It is enough to see that $d_f \omega_{k,k}=0$.  For $k=0$,
this is true by construction.  For $k\ge 1$, assume the property holds at rank $k-1$.  Then
$$
d_f \omega_{k,k}= d_f \bar \partial \omega_{k,k-1} = \bar \partial d_f \omega_{k,k-1}
= \bar \partial  \omega_{k-1,k-1}=\bar \partial^2  \omega_{k-1,k-2}=0.
$$

From the construction, we have $\|\omega_{k+1,k}\| \le  C_f \| \omega_{k,k}\|$,
with $C_f$ defined in \eqref{normR}.
Since 
$$
\omega_{k,k}= \bar \partial ( R_f\omega_{k-1,k-1}) = \bar \partial \left( \omega_{k-1,k-1} \wedge \sum_{j=1}^{N}\frac{\chi_j}{f_j}e_{j} \right) = 
\omega_{k-1,k-1} \wedge \bar \partial \left(  \sum_{j=1}^{N}\frac{\chi_j}{f_j}e_{j} \right)
$$
because $\omega_{k-1,k-1}$ is $\bar\partial$-exact, we find $\|\omega_{k,k}\| \le  D'_f  \| \omega_{k-1,k-1}\|$,
with 
$$
D'_f := N \sup_{1\le j\le N, z\in \Omega} \frac{\|\nabla\chi_j(z)\|}{|f_j(z)|}.
$$
By an immediate induction, $\|\omega_{k,k}\| \le  (D'_f)^k$, $\|\omega_{k+1,k}\| \le C_f (D'_f)^k$.

We proceed with the construction of the forms $\alpha_{k,l}$, by 
descending induction.  Set $\alpha_{d+2,d}=\alpha_{d+1,d}=0$. 
Since $\bar \partial\omega_{d+1,d}=0$ by degree reasons,
there exists $u \in L_{d-1}^{d+1}$ such that $\bar \partial u = \omega_{d+1,d}$, and
$\|u\| \le E_d \| \omega_{d+1,d}\|$. We set $\alpha_{d+1,d-1}=u$.

Suppose given $\alpha_{k+1,k}=d_f \alpha_{k+2,k}$, with $\bar\partial \omega_{k+1,k}- \bar\partial \alpha_{k+1,k}=0$
(this is trivially verified when $k=d$).  Then the hypothesis on $\Omega$ implies that there exists
$u \in L_{k-1}^{k+1}$ such that 
$$\|u\| \le E_k \| \omega_{k+1,k}-  \alpha_{k+1,k}\|\mbox{ and }
\bar \partial u = \omega_{k+1,k}-  \alpha_{k+1,k}. 
$$
Then we set $\alpha_{k+1,k-1}:=u$. 

Finally, we put $\alpha_{k,k-1}:= d_f \alpha_{k+1,k-1}$. We need to check the condition on $\bar \partial$:
$$
\bar \partial \alpha_{k,k-1} = d_f \bar \partial \alpha_{k+1,k-1} = 
d_f \left( \omega_{k+1,k}-  d_f \alpha_{k+2,k} \right) = d_f  \omega_{k+1,k} = \omega_{k,k} = \bar \partial \omega_{k,k-1}.
$$
The following
 diagram,  where ${\mathcal{S}}$ stands for the operator
 solving the $\bar \partial $ equation, 
 describes the whole complex for $n=2,$ $N=3$.

\resizebox{15cm}{!}{\includegraphics{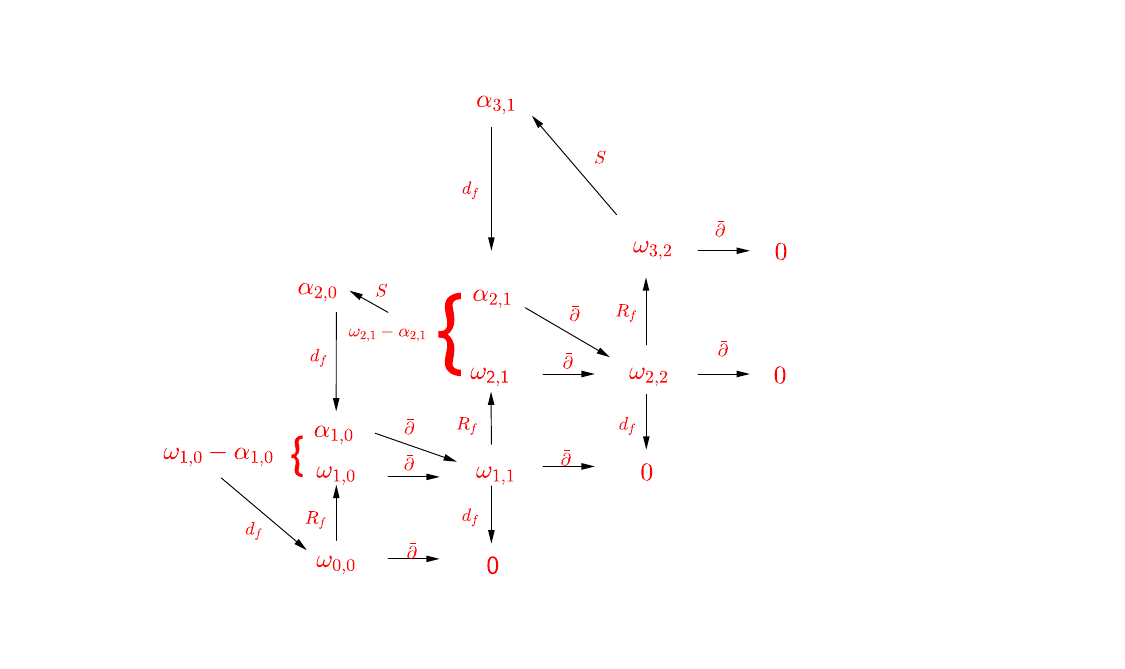}}

The bounds on the solution of the Cauchy-Riemann equation $\bar \partial$ 
and those on $\omega_{k,l}$ imply that 
$$
\|\alpha_{k+1,k-1}\| \le E_k \left( C_f (D'_f)^k + \|d_f\| \|\alpha_{k+2,k}\|\right),
$$
from which we deduce by induction
$$
\|\alpha_{k+1,k-1}\| \le C_f \sum_{j=k}^{d-1} (D'_f)^j \|d_f\|^{j-k} \prod_{i=k}^j E_j
+ \|d_f\|^{d} \left( \prod_{j=k}^{d-1} E_j \right) \|\alpha_{d+1,d-1}\|,
$$
so taking into account the bound $\|\alpha_{d+1,d-1}\| \le E_d  \|\omega_{d+1,d}\|$,
we have for any $k$
$$
\|\alpha_{k+1,k-1}\| \le C(d) \|f\|_\infty^d \left( \prod_{j=1}^{d} E_j \right) C_f (D'_f)^{d}.
$$
Finally, we claim that a solution to the Bezout equation is given by
the components of 
$\gamma_{1,0}:=\omega _{1,0}-\alpha _{1,0}=: \sum_{j=1}^{N}{g_{j}e_{j}}$.

Indeed, $\bar \partial (\alpha_{1,0} - \omega_{1,0})=0$, so the coefficients
of $\gamma_{1,0}$ are holomorphic functions, and 
$$
\sum_{j=1}^N g_j f_j =
d_f (\gamma_{1,0}) = d_f \left(\omega _{1,0}- d_f \alpha_{2,0}\right)
= d_f (\omega _{1,0}) = \omega _{0,0}=1.
$$
The bound on the $g_j$ follows from the bounds on $\|\alpha_{1,0}\|$
and $\|\omega_{1,0}\|$ and Lemma \ref{partition},
which gives $C_f \le C \frac{N^2}\delta$, $D'_f \le C \frac{N^4}{\delta^2}$.
\end{proof*}

\end{document}